\theoremstyle{plain}
\newtheorem{thm}{Theorem}[section]
\newtheorem{lemma}[thm]{Lemma}
\newtheorem{prop}[thm]{Proposition}
\newtheorem{cor}[thm]{Corollary}
\theoremstyle{definition}
\newtheorem{defn}[thm]{Definition}
\theoremstyle{remark}
\newcommand{\N}{{\rm I\kern-.22em N}}
\newcommand{\Z}{{\sf Z\kern-.42em Z}}
\newcommand{\R}{\mathbb{R}}
\newcommand{\BbbC}
  {{\rm\kern.22em\rule[.1ex]{.06em}
       {1.4ex}\kern-.28em C}}
\newcommand{\BbbQ}
  {{\rm\kern.22em\rule[.1ex]{.06em}
       {1.4ex}\kern-.28em Q}}
\author{
Jos\'{e} A. {\sc Carrillo}$^{\ast}$ and
Yoshie {\sc Sugiyama}$\dag$
\vspace{3mm} \\
{\small $^{\ast}$
Department of Mathematics,} \\
{\small Imperial College London,}
{\small SW7 2AZ London, {\sc United Kingdom},}
\vspace{2mm} \\
{\small
$\dag$ Faculty of Mathematics,} \\
{\small  Kyushu University, Fukuoka 819-0395, {\sc Japan}}
}
\title{Compactly supported stationary states of the degenerate Keller-Segel
system in the diffusion-dominated regime}
\begin{document}

\maketitle
 \vskip5mm
\begin{abstract}
We first show the existence of unique global minimizer of the free energy for all masses associated to a nonlinear diffusion version of the classical Keller-Segel model when the diffusion dominates over the attractive force of the chemoattractant. The strategy uses an approximation of the variational problem in the whole space by the minimization problem posed on bounded balls with large radii. We show that all stationary states in a wide class coincide up to translations with the unique compactly supported radially decreasing and smooth inside its support global minimizer of the free energy. Our results complement and show alternative proofs with respect to \cite{Strohmer,LY,Kim-Yao}.
\end{abstract}
\maketitle
\section{Introduction}

In this work, we are interested in obtaining a full self-contained characterization of the stationary states in a certain class of functions for the following nonlinear version of the Keller-Segel (KS) model for chemotaxis
$$
\left\{
    \begin{array}{llll}
         & \rho_t  =
             \nabla \cdot \Big( \nabla \rho^m - \rho \nabla c \Big),
                    & x \in \R^N, \ 0<t<\infty, \nonumber \\
         & - \Delta c = \rho,
                    & x \in \R^N, \ 0<t<\infty, \nonumber \\
         & \rho(x,0)   = \rho_0(x),
                    & x \in \R^N,
     \end{array}
  \right.
$$
where $N \ge 3$ and $m > 2-\frac{2}{N}$. The initial data $\rho_0$ is a non-negative function
in  $L^{1} \cap L^{\infty}(\R^N)$ with $\rho_0^m \in H^1(\R^N)$. The second equation can be solved by using the fundamental solution $\Gamma$ of the $-\Delta$ operator given by
\begin{eqnarray*} 
\Gamma(x)= \displaystyle \frac{1}{(N-2) \omega_{N-1} |x|^{N-2}}
\end{eqnarray*}
with $\omega_{N-1}$ denoting the area of the unit sphere $S^{N-1}$ in $\R^N$. Therefore, the previous system can be reduced to an aggregation-diffusion equation, as in \cite{CaMcCVi03,BlaCaCa07} for instance, where the nonlinear diffusion models the repulsion between cells/particles and the mean field chemotactic force $c=\Gamma\ast \rho$ models the nonlocal attraction via Newtonian interactions. This nonlinear diffusion version of the Keller-Segel model has been proposed as a remedy to take into account volume/size effects for the cells/particles, see \cite{HiPai01,PaiHi02,Kowalczyk04,CaCa06,LSV} and the references therein.

The aggregation-diffusion equation obtained by substituting $c$ onto the first equation in (KS) has a very nice variational structure. In fact, it is a gradient flow with respect to probability measures as recognized in many previous works and used effectively for understanding qualitative properties of solutions to aggregation-diffusion models, see \cite{CaMcCVi03,BlaCaCa07,BCL,BCC12,Bian-Liu,CLM14,LW} for instance. The free energy functional $E[\rho]$ 
\begin{eqnarray}
E[\rho] & := & \frac{1}{m-1} \int_{\R^N} \rho^m(x) \ dx
                  -  \frac{1}{2}\int_{\R^N} \int_{\R^N} \Gamma(x-y) \rho(x)\rho(y) \ dxdy.
\label{def-EU}
\end{eqnarray}
is formally a Liapunov functional for the evolution (KS) since formal variations fixing the mass of the density $\rho$ leads to
$$
\partial_t\rho(t)=\nabla \cdot \left( \rho(t)\, \nabla \frac{\delta E}{\delta \rho}[\rho(t)]\right)\qquad \mbox{ with } \frac{\delta E}{\delta \rho}[\rho](x) = \frac{m}{m-1}\rho^{m-1}(x)- \Gamma(x) \ast \rho(x)\,,
$$
which is equivalent to the (KS) system. Therefore, in order to find stable stationary states for our (KS) system, we should look for them among (local) minimizers of the free energy functional \eqref{def-EU}.

Concerning the assumptions on the nonlinear diffusion $m$ with respect to the dimension $N$, early works in \cite{KS-SuKu,Sblow-uq-2005,S-ks1,Sugi-threshold} studied the well-posedness in different cases when $m>1$ showing that the exponent $m=2-\frac{2}{N}$ plays an important role regarding global existence versus finite time blow-up of solutions. More precisely, it was proved that
\begin{itemize}
\item[(I)] Case $1 \le m \le 2-\frac{2}{N}$: the problem (KS) is solvable globally in time
for small initial data in $L^{\frac{N(2-m)}{2}}(\R^N)$.
\item[(II)] Case $1 \le m \le 2-\frac{2}{N}$: the problem (KS) can lead to a finite time blow-up for some large initial data.
\item[(III)] Case $m > 2-\frac{2}{N}$: the problem (KS) is solvable globally in time without any restriction on the size of the initial data.
\end{itemize}
This dichotomy of behaviors was clarified in \cite{BCL} where the authors studied the case $m=2-\frac{2}{N}$ and showed the existence of a critical parameter depending only on the mass and dimension $N$ as in the classical Keller-Segel model in two dimensions \cite{DoPe04,BlaDoPe06}. This sharp value is connected to the sharp constant of a variant of the classical HLS inequality which we will also use below. Moreover, recent works \cite{CCH1,CCH2} have identified the different scaling properties of the two competing effects of the free energy functional as the main reason behind this behavior dichotomy. In fact, if $m>2-\frac{2}{N}$ the nonlinear diffusion term in the functional dominates over the attractive part and one should expect the existence of asymptotically stable stationary states. From this point of view, a more updated revision of the dichotomy behaviors allow us to identify the following cases:
\begin{itemize}
\item[(I)] Aggregation-Dominated Case: $1 < m < 2-\frac{2}{N}$: global existence for some initial data and finite time blow-up coexist. There are can be unstable stationary states for some values of $m$, see \cite{CoPeZa04,Sblow-uq-2005,Luck-Sugi,KS-SuKu,S-ks1,Luck-Sugi-2,Sugi-threshold,CLW,BL,CW,CaCo14} for more information.
\item[(II)] Fair-Competition Case: $m = 2-\frac{2}{N}$: the problem (KS) exhibits a critical mass, see \cite{BCL} for the degenerate case. We refer to \cite{CCH2} for a comprehensive list of references in the classical linear diffusion case.
\item[(III)] Diffusion-dominated Case: $m > 2-\frac{2}{N}$: solutions exist globally in time without any restriction on the size of the initial data and they are uniformly bounded in time \cite{CaCa06,S-ks1,KS-SuKu}. Existence of localized radially symmetric compactly supported steady states have been proven in two dimensions for $m>1$ in \cite{CCV}. They have recently been shown to attract all solutions of (KS) for $m>1$ and $N=2$ for large times in \cite{CHVY}.
\end{itemize}

Our work can be considered a continuation in the effort of understanding the diffusion-dominated regime in the case of Newtonian interaction in any dimension. Early works \cite{LY,Strohmer}, see also \cite{Kim-Yao} and the references therein, showed the existence and uniqueness of compactly supported radial minimizers of the free energy \eqref{def-EU} in the three dimensional case for $m>\frac43$. We will give a self-contained proof of this result together with regularity and qualitative properties of the global minimizers for all dimensions $N\geq 3$, $m>2-\frac{2}{N}$. We will also characterize them in terms of solutions to an obstacle problem. Here, we follow the strategy in \cite{Strohmer} where the existence of unique radial minimizers of the free energy \eqref{def-EU} in the three dimensional case was established.

Let us state the main result in this work about stationary solutions of the degenerate Keller-Segel problem (KS). We start by defining properly the concept of stationary solution.

\begin{defn}\label{stationarystates}
Let $U_s \in L^1_+(\mathbb{R}^N)\cap L^\infty(\mathbb{R}^N)$ be a given function.
We call it {\it a stationary solution} of the (KS) system if $U_s^{m}\in H^1_{loc} (\mathbb{R}^N)$, $\nabla V_s:=\nabla \Gamma \ast U_s\in L^1_{loc} (\mathbb{R}^N)$, and it satisfies
\begin{equation}\label{steady}
\nabla U_s^{m} = - U_s \nabla V_s \quad \text{ in } \mathbb{\R}^N
\end{equation}
in the sense of distributions in $\R^N$.
\end{defn}

Let us point out that the recent work \cite{CHVY} shows that if stationary solutions of (KS) with $m>1$ exist in the sense of Definition \ref{stationarystates}, then they must be radially symmetric and decreasing about their center of mass. This result generalizes for a very large class of aggregation-diffusion equations, some symmetry results in \cite{Strohmer,CCV} obtained by moving plane techniques. We will make use of this radial symmetry result to identify all stationary states of the (KS) system as the global minimizers of the free energy except translations.

In what follows, we abbreviate simply as $\| \cdot \|_{r}$ the norm in $L^r(\R^N)$ and  $B_R$ denotes the standard open ball in $\R^N$ centered at the origin with the radius $R>0$. Our main result reads:

\begin{thm} \label{thm:ch-statinary-solution}
Let $N \ge 3$, $m > 2-\frac{2}{N}$, and $M>0$.
There exists a pair of functions $(U_M,V_M)$ with the following properties:
\begin{enumerate}
\item[(i)] $U_M$ is the unique radial global minimizer of the free energy $E[U]$ in the class
\begin{equation*}
\mathcal{Y}_M := \{U \in L^1 \cap L^m(\R^N);
      \ \|U\|_1 = M, \ U(x) \ge 0 \quad
{\rm for \ a.e.} \   x \in \R^N\}\,.
\end{equation*}
with zero center of mass. Furthermore, it is radially decreasing and compactly supported on $B_{R_M}$, for some $R_M>0$. Moreover, all other global minimizers are determined by translations of $U_{M}$.

\item[(ii)] $U_{M}$ is a stationary solution of (KS) in the sense of Definition {\rm\ref{stationarystates}}.
In addition, $(U_{M},V_{M})$ satisfies \eqref{steady} in the classical sense in the interior of its support satisfying $U_{M} \in C_0(\R^N)\cap C^1(B_{R_M})$, $U_{M}^{m-1} \in W^{1,\infty}(\R^N)\cap W^{2,p}(B_{R_M})$ for all $1<p<\infty$, and $V_{M} \in C^2 \cap L^p(\R^N)$ \ for all $\frac{N}{N-2} < p \le \infty$.

\item[(iii)] Moreover, $U_{M}^{m-1+\delta} \in C^1(\R^N)$
for all $\delta>0$ with  $\nabla U_{M}^{m-1+\delta}(x)=0$ at $\partial B_{R_M}$.

\item[(iv)] The stationary solution satisfies
\begin{eqnarray*}
\frac{m}{m-1} U_{M}^{m-1}(x)
\ = \ \left( V_{M}(x) + \frac1M \int_{\R^N} \left(\frac{m}{m-1} U_{M}^{m-1} -V_M\right) U_M\, dx \right)_+ \,
\end{eqnarray*}
for all $x \in \R^N$.

\item[(v)] If $\tilde{U}_{M}$ is another stationary solution of {\rm (KS)} in the sense of Definition {\rm\ref{stationarystates}}, then $\tilde{U}_{M}$ is a translation of $U_{M}$.
\end{enumerate}
\end{thm}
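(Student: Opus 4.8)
\emph{Proof proposal.} The plan is to follow the bounded-domain approximation of \cite{Strohmer}: first produce minimizers on large balls $B_R$, where compactness is automatic, and then let $R\to\infty$ after showing these minimizers are supported strictly inside $B_R$. The first task is to see that $E$ is bounded below and coercive on $\mathcal Y_M$. Applying the Hardy--Littlewood--Sobolev inequality to $\Gamma(x)\sim|x|^{-(N-2)}$ controls the interaction term by $\|U\|_{2N/(N+2)}^2$, and interpolating $L^{2N/(N+2)}$ between $L^1$ and $L^m$ gives a bound $\lesssim \|U\|_1^{2(1-\theta)}\|U\|_m^{2\theta}$ with $2\theta=\frac{m}{m-1}\frac{N-2}{N}$. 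The hypothesis $m>2-\frac2N$ is precisely the inequality $2\theta<m$, so by Young's inequality the diffusion term $\frac1{m-1}\|U\|_m^m$ absorbs the interaction up to a constant depending only on $M$; this is the analytic incarnation of the diffusion-dominated regime. On a fixed $B_R$ I would then run the direct method: a minimizing sequence is bounded in $L^m(B_R)$, hence weakly convergent along a subsequence, the diffusion term is weakly lower semicontinuous by convexity, and the interaction term is weakly continuous because the Riesz potential $U\mapsto\Gamma\ast U$ is compact on the bounded domain. This yields a minimizer $U_{M,R}$ over densities supported in $B_R$ with mass $M$, and the Riesz rearrangement inequality (which preserves $\int U^m$ and does not decrease the attractive double integral) lets us take it radially symmetric and non-increasing.

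Next I would compute the first variation under the mass constraint to obtain the obstacle relation $\frac{m}{m-1}U_{M,R}^{m-1}=(V_{M,R}+\mu)_+$, where $\mu$ is the Lagrange multiplier and $V_{M,R}=\Gamma\ast U_{M,R}$. Testing this identity against $U_{M,R}$ and using $\|U_{M,R}\|_1=M$ immediately produces the explicit value of $\mu$ claimed in (iv), since on the support $\frac{m}{m-1}U^{m-1}-V=\mu$ and $U$ vanishes off it. Regularity then bootstraps: as $U\in L^1\cap L^\infty$ is compactly supported, $V=\Gamma\ast U\in W^{2,p}_{\rm loc}\subset C^{1,\alpha}$ and decays like $|x|^{-(N-2)}$, which places $V_M\in L^p$ exactly for $p>N/(N-2)$; thus $(V+\mu)_+$ is Lipschitz, giving $U^{m-1}\in W^{1,\infty}$ and hence $U\in C_0$ by continuity of $t\mapsto t^{1/(m-1)}$. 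On the open support one has $U^{m-1}=\frac{m-1}{m}(V+\mu)$, so $-\Delta V=U$ with $U$ continuous forces $V\in C^2$ and $U^{m-1}\in W^{2,p}(B_{R_M})$, i.e. $U\in C^1$ inside, establishing (ii).

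To secure compact support and pass to the whole space, I would use that $V\to0$ at infinity together with the obstacle structure and radial monotonicity to show the support is a ball $B_{R_M}$ with $R_M$ bounded uniformly in $R$; for $R$ large the minimizer then lies strictly inside $B_R$, the boundary constraint is inactive, the whole-space equation $\nabla U_M^m=-U_M\nabla V_M$ of Definition \ref{stationarystates} holds, and $U_{M,R}=U_M$ is a genuine global minimizer over $\mathcal Y_M$ (any whole-space competitor is truncated into some $B_R$ with negligible energy change). For (iii), near the free boundary $V+\mu$ vanishes linearly, its radial derivative being $-\mathcal M(r)/(\omega_{N-1}r^{N-1})\neq0$ with $\mathcal M(r)=\int_{B_r}U_M$; hence $U_M\sim(R_M-r)_+^{1/(m-1)}$ and $U_M^{m-1+\delta}\sim(R_M-r)_+^{1+\delta/(m-1)}$, whose exponent exceeds $1$, giving $U_M^{m-1+\delta}\in C^1(\R^N)$ with vanishing gradient on $\partial B_{R_M}$.

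Finally, for the uniqueness in (i) and for (v) I would reduce to an ODE, and I expect this to be the main obstacle: the functional $E$ is \emph{not} displacement convex, since the Newtonian attraction $-\Gamma$ is displacement concave, so uniqueness cannot be read off from convexity. Instead, using Newton's theorem $V_M'(r)=-\mathcal M(r)/(\omega_{N-1}r^{N-1})$ and differentiating the Euler--Lagrange identity turns the radial profile into the solution of a closed first-order system for $(U_M,\mathcal M)$ with edge condition $U_M(R_M)=0$; the total mass $\mathcal M(R_M)$ depends monotonically on $R_M$, so each $M$ selects a unique radius and a unique radial profile, giving uniqueness up to the translations removed by the zero-center-of-mass normalization. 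For (v), I would invoke the symmetry result of \cite{CHVY}: any stationary solution in the sense of Definition \ref{stationarystates} is radially decreasing about its center of mass, so after translating it to the origin it becomes a radial stationary solution of mass $M$ solving the same ODE, whence it equals $U_M$ and the original is a translate of $U_M$.
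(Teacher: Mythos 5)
Your proposal follows essentially the same route as the paper: approximation by the constrained problems on balls $\mathcal{Y}_{M,R}$ in the spirit of \cite{Strohmer}, the direct method plus Riesz rearrangement to produce radially decreasing minimizers, the Euler--Lagrange obstacle relation $\frac{m}{m-1}U^{m-1}=(V+\mu)_+$ with the multiplier identified exactly as in (iv), a regularity bootstrap, and the symmetry theorem of \cite{CHVY} to reduce (v) to uniqueness of radial profiles with given mass. Two of your local arguments are legitimate variants of the paper's: your free-boundary argument for (iii), using that $V_M+\mu$ vanishes linearly at $\partial B_{R_M}$ because $V_M'(R_M)=-M/(\omega_{N-1}R_M^{N-1})\neq 0$, replaces the paper's Aronson-type estimate and is sound; and your idea for compact support (if the support were unbounded then $\mu=0$ and the obstacle relation with Newton's theorem forces $U\gtrsim r^{-(N-2)/(m-1)}$, which is non-integrable precisely when $m>2-\frac{2}{N}$) can indeed be fleshed out, including a quantitative version giving the uniform-in-$R$ bound on supports that your limiting argument needs, although as written it is only a sketch.

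The genuine gap is the step you yourself call the main obstacle: uniqueness. You reduce to a radial ODE system and then assert that ``the total mass $\mathcal{M}(R_M)$ depends monotonically on $R_M$, so each $M$ selects a unique radius and a unique radial profile.'' This monotonicity is exactly the nontrivial quantitative fact on which statements (i) and (v) rest, and your parametrization by the support radius cannot work as stated: the Lane--Emden scaling $\psi_\alpha(r)=\alpha\Psi(\alpha^{\mu}r)$ with $\mu=\frac{2-m}{2(m-1)}$ gives $R_*(\alpha)=\alpha^{-\mu}R_*(1)$, so for $m=2$ \emph{every} radial profile has the same support radius while the mass ranges over all of $(0,\infty)$; mass is then not a function of the radius at all, and for $2-\frac2N<m<2$ it is a \emph{decreasing} function of the radius, so the claimed monotonicity must in any case be proved, not assumed. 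The paper's remedy is to parametrize the family by the central density $\alpha$ and to show, via the scaling above, that the mass is proportional to $\alpha^{\frac{1}{m-1}-\mu N}$ with exponent $\frac{1}{m-1}-\mu N>0$ if and only if $m>2-\frac{2}{N}$ (Corollary \ref{cor:M-alpha} and the analysis of $M_2(\alpha,R)$ in the final proof); this is also what makes the ball minimizers stabilize for large $R$. Relatedly, your proposal never establishes that the radial ODE profiles have finite support radius in the first place (the paper's Lemma \ref{lemma:local-existence2}, proved by a phase-plane argument); your decay argument covers whole-space minimizers, but the ODE uniqueness used in (v) still needs the full one-parameter picture of the Lane--Emden family. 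Without the monotonicity in $\alpha$, both the uniqueness of the radial global minimizer in (i) and the rigidity of stationary states in (v) remain unproven.
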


As we already mentioned, the confinement for solutions of the (KS) system was recently proved in \cite{CHVY} in two dimensions for all $m>1$. However, in the case of $m>2-\frac{2}{N}$, $N\geq 3$, this confinement result is lacking and despite of the fact that there exists a global
solution $(\rho(t),c(t))$ of the (KS) system for an arbitrary large initial data $\rho_0$,
it is an open question to determine its asymptotic profile as $t \to \infty$. Let us point out that compactness of time diverging sequences is possibly not difficult to achieve using the methods in \cite{Sugi-threshold,Bian-Liu,CHVY}, but uniform in time moment control is difficult to show.
Therefore, we cannot prove or disprove that convergence towards the stationary state can coexist with dispersion of the mass to infinity for some choice of $m$, i.e., dichotomy of asymptotic behavior depending on the initial data cannot be excluded: solutions can converge to the localized stationary state or they can disperse as the Barenblatt solutions for the porous medium equation \cite{Ba1}.

This paper is organized as follows: Section \ref{section:Lane-Emden} is devoted to the
construction of a solution to the so-called Lane-Emden equation. Moreover, we prove its uniqueness and the compactness of the support of the solutions of the Lane-Emden equation.
In Section \ref{section:existence-stationary-point}, we first show the existence, uniqueness up to translations and regularity of global minimizers of the free energy. We analyse the minimization problem in the whole space by approximating it with a corresponding problem in bounded balls. We establish a uniform bound of the support of zero center of mass minimizers with respect to the radius of the ball using the properties of the Lane-Emden equation. We finally prove our main theorem by using the radial symmetry result in \cite{CHVY} to characterize all stationary states.


\section{Lane-Emden equation}
\label{section:Lane-Emden}
\setcounter{equation}{0}

We generalize the construction of a solution of the so-called
Lane-Emden equation to $N\geq 3$ from the existing results for
$N=3$ in \cite{Chandra,S-Z,BU,Batt-P}, see also \cite{JL}. As remarked in
the introduction, we give a self-contained proof of the existence
of radial stationary solutions by dynamical system arguments. We also show its uniqueness and the property of compactness of the support of solutions based on them without resorting to nonlinear elliptic equations theory \cite{Bian-Liu,CL} nor to variational arguments as in \cite{Kim-Yao, LY}.

\begin{lemma}\label{lemma:local-existence}
Let $N \ge 3$ and $m > 2-\frac{2}{N}$. For every $\alpha>0$, there
exist a unique $0<R_*=R_*(\alpha)\leq\infty$ and a unique solution
$\psi \in C^1([0,R^*))\cap C^2((0,R_*))$ of
\begin{eqnarray}
\left\{
\begin{array}{lll}
\displaystyle \psi''(r) + \frac{N-1}{r} \psi^{\prime}(r) \ = \ -
\frac{m-1}{m} \psi^{\frac{1}{m-1}}(r) \qquad {\it for \ all} \ 0<
r < R_*,
\vspace{2mm} \\
\psi(0) = \alpha, \quad \psi^{\prime}(0) = 0
\end{array}
\right.\label{le}
\end{eqnarray}
such that
 \begin{eqnarray} \label{monotone-dec-phi}
&&    \psi(r)>0 \ \ {\it and} \ \  \psi^{\prime} (r) \ < \ 0 \quad {\it for \ all} \ r \in (0,R_*);  \\
\label{asymp-phi} && r^{-1} \psi^{\prime}(r) \ \longrightarrow \ -
\frac{(m-1)\alpha^{\frac{1}{m-1}}}{Nm} \qquad
{\it as} \ r \to 0; \\
&& \mbox{If } R_*<\infty  \mbox{ then } \lim_{r \to R_*} \psi(r) \
= \ 0. \label{cmp-sp}
\end{eqnarray}
\end{lemma}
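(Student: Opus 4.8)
The plan is to recast the singular boundary-value problem \eqref{le} as a fixed-point equation in which the singular coefficient $\frac{N-1}{r}$ becomes harmless. Multiplying \eqref{le} by $r^{N-1}$ rewrites the left-hand side as $(r^{N-1}\psi')'$; integrating once against $\psi'(0)=0$ and once against $\psi(0)=\alpha$ yields the integral identity
\begin{equation*}
\psi(r)=\alpha-\frac{m-1}{m}\int_0^r t^{1-N}\int_0^t s^{N-1}\psi^{\frac{1}{m-1}}(s)\,ds\,dt=:T[\psi](r).
\end{equation*}
A positive solution of \eqref{le} is exactly a fixed point of $T$, and conversely. The point of this reformulation is that the weight $s^{N-1}$ cancels the $t^{1-N}$ singularity, so $T$ sends continuous functions to $C^1$ functions and absorbs the apparent singularity at the origin.

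First I would establish local existence and uniqueness near the origin. On the ball $\{\psi\in C([0,\delta]):\|\psi-\alpha\|_\infty\le\alpha/2\}$ the nonlinearity $\psi\mapsto\psi^{1/(m-1)}$ stays bounded away from $0$ and is therefore smooth and Lipschitz, while the double integral contributes a factor of order $\delta^2$; hence for $\delta$ small $T$ maps this ball into itself and is a contraction there. The Banach fixed-point theorem gives a unique continuous $\psi$ on $[0,\delta]$, which the integral identity upgrades to $\psi\in C^1([0,\delta])$ with $\psi'(0)=0$, and then to $C^2((0,\delta])$ through the ODE once $\psi$ is known to be continuous and positive.

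Next I would run the continuation and read off the monotonicity. Differentiating the integral identity gives
\begin{equation*}
\psi'(r)=-\frac{m-1}{m}\,r^{1-N}\int_0^r s^{N-1}\psi^{\frac{1}{m-1}}(s)\,ds,
\end{equation*}
so $\psi'(r)<0$ on any interval where $\psi>0$, which is \eqref{monotone-dec-phi}. I then define $R_*$ as the supremum of radii to which the positive solution extends. Away from the origin the equation is regular and its nonlinearity is locally Lipschitz on $\{\psi>0\}$, so classical ODE continuation applies as long as $\psi$ remains positive and bounded; since $\psi$ is decreasing and bounded above by $\alpha$, the sole obstruction to continuation is $\psi\to 0$. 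If $R_*<\infty$, monotonicity forces a limit $L:=\lim_{r\to R_*}\psi(r)\ge 0$, and $L>0$ would permit extension past $R_*$, contradicting maximality; hence $L=0$, which is \eqref{cmp-sp}. Uniqueness of both $R_*$ and $\psi$ follows since the contraction argument gives uniqueness near $0$ and classical uniqueness on $\{\psi>0\}$ propagates it to every $[0,R_*-\varepsilon]$.

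Finally, \eqref{asymp-phi} follows by dividing the formula for $\psi'$ by $r$ and letting $r\to 0$: since $\psi(s)\to\alpha$, L'H\^opital gives $r^{-N}\int_0^r s^{N-1}\psi^{1/(m-1)}(s)\,ds\to \alpha^{1/(m-1)}/N$, whence $r^{-1}\psi'(r)\to -\frac{(m-1)\alpha^{1/(m-1)}}{Nm}$. I expect the main obstacle to be precisely the local analysis at the singular point $r=0$: once the integral reformulation tames the $1/r$ coefficient and the contraction is set up on a short interval where $\psi$ stays bounded away from zero, everything else follows from soft continuation and elementary estimates. In particular, the non-Lipschitz behavior of $\psi^{1/(m-1)}$ at $\psi=0$ (which occurs when $m>2$) never interferes, because the solution is only tracked on the set $\{\psi>0\}$.
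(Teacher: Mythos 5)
Your proposal is correct, and it rests on the same pillars as the paper's proof: an integral reformulation exploiting the cancellation between the weight $s^{N-1}$ and the singular factor $t^{1-N}$, a Banach fixed-point argument on a short interval, classical ODE continuation away from $r=0$ on the set $\{\psi>0\}$, and the explicit integral formula for $\psi'$ to obtain \eqref{monotone-dec-phi} and \eqref{asymp-phi}. The implementation differs in one worthwhile way: you contract on the ball $\{\|\psi-\alpha\|_\infty\le\alpha/2\}$ in $C([0,\delta])$ for the unknown $\psi$ itself, whereas the paper contracts in the negative cone of $L^1(0,\delta)$ for the unknown $W=\psi'$, see \eqref{def-psi-integral}--\eqref{def-WOrd}. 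Your choice confines the argument of the nonlinearity to $[\alpha/2,3\alpha/2]$, where $x\mapsto x^{1/(m-1)}$ is smooth and hence uniformly Lipschitz for every $m>2-\frac{2}{N}$; this removes the paper's case distinction between $2-\frac{2}{N}<m\le2$ (inequality \eqref{onecase}) and $m>2$ (mean value theorem with the lower bound \eqref{tech2}), which is forced there because the $L^1$ setting controls the function values of $\alpha+\int_0^s W\,d\sigma$ only through \eqref{tech1}--\eqref{tech2}. What the paper's formulation buys is that the sign $W\le0$, hence the monotonicity \eqref{monotone-dec-phi}, is built into the space; you recover it just as quickly from your displayed formula for $\psi'$. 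One step you should make explicit in the continuation argument: to conclude that $\psi\to0$ is the \emph{only} obstruction when $R_*<\infty$, you must also rule out blow-up of the derivative, since the phase point $(\psi,\psi')$ has to remain in a compact subset of $\{\psi>0\}\times\R$ in order to restart the ODE at $R_*$ when $L>0$. This is immediate from your own identity, which gives $|\psi'(r)|\le\frac{(m-1)\alpha^{1/(m-1)}}{Nm}\,r$ on the whole interval of existence, but it deserves a sentence; note that the paper itself defers this derivative bound to Step 4 of the proof of Lemma \ref{lemma:local-existence2}.
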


\begin{proof}
Let us first realize that we can reduce the construction of
solutions of \eqref{le} to the construction of solutions of an integral
equation. Let us assume that the solution of \eqref{le} has the form
\begin{equation} \label{def-psi-integral}
\psi(r) := \alpha + \int_0^r W(s) \ ds\,,
\end{equation}
with $W\leq 0$ and $W\in L^1(0,\delta)$ for some $\delta >0$ to be
fixed later. Let us check that if $W$ is the solution of the
integral equation
\begin{equation}\label{def-WOrd}
W(r) = -\frac{m-1}{mr^{N-1}} \int_0^r s^{N-1}
                 \Big( \alpha + \int_0^s W(\sigma) \ d \sigma  \Big)^{\frac{1}{m-1}} \ ds.
\end{equation}
Then $\psi$ defined by (\ref{def-psi-integral}) is a solution of
\eqref{le}. Indeed, if $W\in L^1(0,\delta)$ and $W\leq 0$ then
\eqref{def-WOrd} implies that
\begin{equation}\label{tech1}
\|W\|_{L^\infty(0,\delta)}\leq \frac{(m-1)\delta}{Nm} \Big( \alpha
+ \|W\|_{L^1(0,\delta)} \Big)^{\frac{1}{m-1}} \,,
\end{equation}
and that $\psi(r)\geq \alpha-\|W\|_{L^\infty(0,\delta)}>0$ for
$\delta$ small enough. We also deduce that $W\in C^1((0,\delta))$
and thus $\psi\in C^2((0,\delta))$ by the fundamental theorem of
calculus. Taking derivatives on (\ref{def-psi-integral}), we get
\begin{align*}
\psi^{\prime \prime}(r) + \frac{N-1}{r} \psi^{\prime}(r) = &\,
W^{\prime}(r) + \frac{N-1}{r} W(r)
\nonumber \\
= &\,-\frac{m-1}{m} \psi^{\frac{1}{m-1}}(r) +
\frac{(m-1)(N-1)}{mr^N} \int_0^r s^{N-1} \psi^{\frac{1}{m-1}}(s) \
ds
\nonumber \\
&\,- \frac{(N-1)(m-1)}{mr^{N}} \int_0^r s^{N-1}
                 \psi^{\frac{1}{m-1}}(s) \ ds
\nonumber \\
= &\, -\frac{m-1}{m} \psi^{\frac{1}{m-1}}(r), \qquad r>0.
\end{align*}
In fact, it is also easy to verify that $W^\prime\in
L^\infty(0,\delta)$. Therefore, the function $\psi$ defined by
(\ref{def-psi-integral}) belongs to $W^{2,\infty}(0,\delta)$ and
thus it belongs to $C^1([0,\delta])$ by Sobolev embeddings. Let us
now check that the initial conditions are met: $\psi(0)=\alpha$
and $\psi^{\prime}(0)=0$. Indeed, the integrability of $W$ and
\eqref{tech1} implies that
\begin{align*}
\lim_{r \to 0} \left| \psi(r) - \alpha \right| & = \lim_{r \to 0}
\left| \alpha + \int_0^r W(s) \ ds - \alpha \right| \ \le \
\lim_{r \to 0} \int_0^r |W(s)| \ ds \ = \ 0,
\nonumber \\
\lim_{r \to 0} \left| \psi^{\prime}(r) \right| & =  \lim_{r \to 0}
\left| W(r) \right| \ \leq \ \lim_{r \to 0} \frac{(m-1)r}{Nm}
\Big( \alpha + \|W\|_{L^1(0,r)} \Big)^{\frac{1}{m-1}} =  0.
\end{align*}
Thus we find that a function $\psi$ defined by
(\ref{def-psi-integral}) with $W\leq 0$ an integrable solution to
(\ref{def-WOrd}) gives a solution of \eqref{le}. It is trivial to verify
that the converse is also true, meaning that given a solution to
\eqref{le} and defining $W(r)=\psi^\prime(r)$ then $W(r)$ is a
nonpositive integrable solution of (\ref{def-WOrd}).

Let us now show the existence of solution to (\ref{def-WOrd}). For
$\varepsilon, \delta>0$, we introduce $X_{\varepsilon,\delta}$ by
\begin{eqnarray*}
X_{\varepsilon,\delta} & := &
    \left\{ g \in L^1(0,\delta); \ \|g\|_{L^1(0,\delta)} \le \varepsilon, \
            g(r) \le 0 \ {\rm for \ all} \ r \in [0,\delta)
            \right\}\,,
\end{eqnarray*}
where $\delta$ will be chosen later on small enough depending on
$\varepsilon$ and $\alpha$. We define the operator $F$ by
\begin{eqnarray*}
F: W(r) \in X_{\varepsilon,\delta} \ \longmapsto \
          -\frac{m-1}{mr^{N-1}} \int_0^r s^{N-1}
                 \Big( \alpha + \int_0^s W(\sigma) \ d \sigma  \Big)^{\frac{1}{m-1}} \ ds.
\end{eqnarray*}
This operator is well-defined since by \eqref{tech1}, we get
\begin{equation}\label{tech2}
\alpha + \int_0^s W(\sigma) \ d \sigma \geq
\alpha-\|W\|_{L^\infty(0,\delta)}\geq
\alpha-\frac{(m-1)\delta}{Nm} \Big( \alpha + \varepsilon
\Big)^{\frac{1}{m-1}} >0\,,
\end{equation}
by choosing
$$
\delta <
\min\left\{1, \ \frac{Nm\alpha}{(m-1)(\alpha+\varepsilon)^{\frac{1}{m-1}}} \right\} \,.
$$
Let us prove that $F(X_{\varepsilon,\delta}) \subset
X_{\varepsilon,\delta}$ for suitable $\delta$. Indeed, since $W
\in X_{\varepsilon,\delta}$, it holds from \eqref{tech2} that
$F(W)\leq 0$. Moreover, we get
\begin{eqnarray*}
\int_0^{\delta} |F(W(s))| \ ds & = & \int_0^{\delta}
             \frac{m-1}{ms^{N-1}}
                    \left|
             \int_0^s \sigma^{N-1}
                 \Big( \alpha + \int_0^{\sigma} W(\tau) \ d \tau  \Big)^{\frac{1}{m-1}} \ d\sigma
                    \right| \ ds
\nonumber \\
& \le & \int_0^{\delta}
             \frac{m-1}{ms^{N-1}}
             \int_0^s \sigma^{N-1}
                   (\alpha+\varepsilon)^{\frac{1}{m-1}}
                     \ d\sigma
                     \ ds = \frac{(m-1)}{2mN}
             \delta^2
                   (\alpha+\varepsilon)^{\frac{1}{m-1}} .
\end{eqnarray*}
Thus, the operator $F$ maps $X_{\varepsilon,\delta}$ onto itself
if we choose
\begin{equation}\label{def-delta1}
\delta
  \leq \delta_1:=\min\left\{1, \ \frac{Nm\alpha}{(m-1)(\alpha+\varepsilon)^{\frac{1}{m-1}}}, \ \left(\frac{2 \varepsilon Nm}{(m-1)(\alpha+\varepsilon)^{\frac{1}{m-1}}}
  \right)^{\frac{1}{2}}\right\}\,.
\end{equation}
Next, we shall show that $F$ is a contraction map from
$X_{\varepsilon,\delta}$ into itself for suitable small $\delta$.
We split this in two cases. Let us first show it for $2-2/N<m\leq
2$. In this range the function $x^p$ with $p=1/(m-1)$ is Lipschitz
and satisfies
$$
|a^p-b^p|\leq 2^{p-1}p \max\{a^{p-1},b^{p-1}\}|a-b|
$$
for all $a,b\geq 0$. Thus, we deduce that
\begin{align}
\left|
               \Big( \alpha + \int_0^{\sigma} W_1(\tau) \ d \tau  \Big)^{\frac{1}{m-1}}
              \right.-&\,\left.
               \Big( \alpha + \int_0^{\sigma} W_2(\tau) \ d \tau  \Big)^{\frac{1}{m-1}}
                   \right|\nonumber\\
 &\le \frac{2^{\frac{2-m}{m-1}}}{m-1} ( \alpha + \varepsilon )^{\frac{2-m}{m-1}}
\int_0^{\sigma} |W_1(\tau) - W_2(\tau)| \ d \tau\,.
\label{onecase}
\end{align}
In the second case, $m>2$, we use the mean value theorem together
with \eqref{tech1} to get
\begin{align}
\left|
               \Big( \alpha + \int_0^{\sigma} \right.&\left.W_1(\tau) \ d \tau  \Big)^{\frac{1}{m-1}}
              -\,
               \Big( \alpha + \int_0^{\sigma} W_2(\tau) \ d \tau  \Big)^{\frac{1}{m-1}}
                   \right|
\nonumber \\
& \le
\left(\alpha-\max\{\|W_1\|_{L^\infty(0,\delta)}, \|W_2\|_{L^\infty(0,\delta)}\} \right)^{\frac{2-m}{m-1}}
\int_0^{\sigma} |W_1(\tau) - W_2(\tau)| \ d \tau
\nonumber \\
& \le  \left(\alpha- \frac{(m-1)\delta}{
Nm}(\alpha+\varepsilon)^{\frac{1}{m-1}} \right)^{\frac{2-m}{m-1}}
\int_0^{\sigma} |W_1(\tau) - W_2(\tau)| \ d \tau\,.
\label{secondcase}
\end{align}
Putting together \eqref{onecase} and \eqref{secondcase} and taking
into account \eqref{def-delta1}, we get
$$
\left|
               \Big( \alpha + \int_0^{\sigma} W_1(\tau) \ d \tau  \Big)^{\frac{1}{m-1}}
              -
               \Big( \alpha + \int_0^{\sigma} W_2(\tau) \ d \tau  \Big)^{\frac{1}{m-1}}
                   \right|
\leq L_{\alpha,\varepsilon} \int_0^{\sigma} |W_1(\tau) -
W_2(\tau)| \ d \tau,
$$
where $L_{\alpha,\varepsilon}$ is given by
$$
 L_{\alpha,\varepsilon} :=
\max \left\{ \frac{2^{\frac{2-m}{m-1}}}{m-1} ( \alpha + \varepsilon
)^{\frac{2-m}{m-1}} , \left(\alpha- \frac{(m-1)\delta_1}{
Nm}(\alpha+\varepsilon)^{\frac{1}{m-1}} \right)^{\frac{2-m}{m-1}}
\right\}.
$$
Hence, we conclude that
\begin{align*}
 \|F&(W_1) - F(W_2) \|_{L^1(0,\delta)}
 = \int_0^{\delta} |F(W_1(s)) - F(W_2(s))| \ ds
\nonumber \\
& \le \int_0^{\delta}
             \frac{m-1}{ms^{N-1}}
             \int_0^s \sigma^{N-1}
               \left|
               \Big( \alpha + \int_0^{\sigma} W_1(\tau) \ d \tau  \Big)^{\frac{1}{m-1}}
              -
               \Big( \alpha + \int_0^{\sigma} W_2(\tau) \ d \tau  \Big)^{\frac{1}{m-1}}
                   \right| \ d\sigma
                     \ ds
                     \nonumber \\
& \le \int_0^{\delta}
             \frac{m-1}{ms^{N-1}}
             \int_0^s \sigma^{N-1} L_{\alpha,\varepsilon}
      \displaystyle \int_0^{\sigma} |W_1(\tau) - W_2(\tau)| \ d \tau
                \ d\sigma
                     \ ds
\nonumber \\
& \le
     \frac{(m-1)L_{\alpha,\varepsilon}}{2Nm} \delta^2
                       \|W_1 - W_2\|_{L^1(0,\delta)} \,.
\end{align*}
By choosing now
\begin{equation*}
\delta
  =
  \min \left\{\delta_1,
\left(\frac{Nm}{L_{\alpha,\varepsilon}(m-1)}\right)^{\frac{1}{2}}\right\}
\,,
\end{equation*}
we finally obtain
$$
 \|F(W_1) - F(W_2) \|_{L^1(0,\delta)}\leq \frac12 \|W_1 -
 W_2\|_{L^1(0,\delta)}\,,
$$
as desired. Therefore, the contraction mapping theorem yields the
existence and uniqueness of solution $W_*$ in
$X_{\varepsilon,\delta}$ of (\ref{def-WOrd}). We shall show
(\ref{monotone-dec-phi}). Since \eqref{le} can be rewritten as
\begin{eqnarray}\label{phi-radial}
 \frac{1}{r^{N-1}} \Big(r^{N-1} \psi^{\prime}(r) \Big)^{\prime}
       =
-\frac{m-1}{m} (\psi(r))^{\frac{1}{m-1}} \ < \ 0,
\end{eqnarray}
we infer that $\psi^{\prime}(r)<0$ for all $0<r<\delta$. Thus we
obtain (\ref{monotone-dec-phi}) on $[0,\delta)$. A standard
extension argument for ordinary differential equations proves the
existence of a solution satisfying the stated properties in
\eqref{monotone-dec-phi} for all $r$ as long as $\psi(r)>0$.
Therefore, we get a solution up to a maximal, possibly infinity at
this stage, $0<R_*\leq \infty$ satisfying \eqref{cmp-sp}. Finally,
we show (\ref{asymp-phi}). We can write
\begin{align*}
r^{-1} \psi^{\prime}(r) + \frac{(m-1) \alpha^{\frac{1}{m-1}}}{Nm}
& =  - \frac{m-1}{mr^{N}} \int_0^r s^{N-1} \psi^{\frac{1}{m-1}}(s)
\ ds + \frac{(m-1) \alpha^{\frac{1}{m-1}}}{Nm}
\nonumber \\
& = - \frac{m-1}{m} \left( \frac{1}{r^N} \int_0^r s^{N-1}
                 (\psi^{\frac{1}{m-1}}(s)-\psi^{\frac{1}{m-1}}(0)) \ ds
\right)\,,
\end{align*}
and thus, we deduce
$$
\left|r^{-1} \psi^{\prime}(r) + \frac{(m-1)
\alpha^{\frac{1}{m-1}}}{Nm} \right| \le \frac{m-1}{Nm}
\sup_{0<s<r}
   \left| \psi^{\frac{1}{m-1}}(0)- \psi^{\frac{1}{m-1}}(s) \right|
                  \ ds
\ \to \ 0 \qquad {\rm as} \ r \to 0\,,
$$
as claimed.
\end{proof}

We are now going to show that the maximal existence interval for
the solution constructed in the previous lemma is finite.

\begin{lemma}\label{lemma:local-existence2}
Let $N \ge 3$ and $m > 2-\frac{2}{N}$. For every $\alpha>0$, it
holds that $0<R_*(\alpha)<\infty$ for the solution obtained in
Lemma {\rm\ref{lemma:local-existence}}.
\end{lemma}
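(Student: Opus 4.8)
The plan is to rule out $R_*=\infty$ by recasting \eqref{le} as an autonomous planar system via an Emden--Fowler change of variables and reading the finiteness of the support off the phase portrait, in the spirit of the announced dynamical--systems approach. Write $q:=\tfrac{1}{m-1}$ and observe that $m>2-\tfrac2N$ is exactly the subcriticality relation $q<\tfrac{N}{N-2}$, which for $q>1$ reads $\tfrac{2}{q-1}>N-2$ (for $q\le1$ the sign conditions used below hold trivially). On the maximal interval $(0,R_*)$, where $\psi>0$ and $\psi'<0$ by \eqref{monotone-dec-phi}, set $t:=\log r$ and
\begin{equation*}
z(t):=-\frac{r\psi'(r)}{\psi(r)}>0,\qquad w(t):=\frac{m-1}{m}\,r^{2}\psi^{\,q-1}(r)>0 .
\end{equation*}
A direct computation from \eqref{phi-radial} turns \eqref{le} into
\begin{equation*}
\dot z=z^{2}-(N-2)z+w,\qquad \dot w=\bigl(2-(q-1)z\bigr)w ,
\end{equation*}
and $\psi$ vanishing at a finite radius is equivalent to $z\to+\infty$ at a finite time $t_{*}$, since the $C^{2}$ solution keeps $z$ finite as long as $\psi>0$.

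First I would locate the orbit and the rest points. By \eqref{asymp-phi} one has $z,w\to0$ as $r\to0^{+}$, so the solution of Lemma~\ref{lemma:local-existence} is precisely the orbit leaving the saddle $(0,0)$ along its unstable eigendirection $(1,N)$ into the open quadrant $\{z>0,\,w>0\}$. The only equilibria with $w\ge0$ are $(0,0)$ and $(N-2,0)$; here subcriticality is essential, as the branch $z=\tfrac{2}{q-1}$ of $\{\dot w=0\}$ meets $\{\dot z=0\}$ only where $w=z(N-2-z)<0$ (because $\tfrac{2}{q-1}>N-2$), so there is \emph{no} equilibrium in the open quadrant, and the same inequality makes $(N-2,0)$ a source.

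Next I would show the orbit escapes to $z=+\infty$. Let $\Omega:=\{z>0,\,w>0,\ w>z(N-2-z)\}=\{\dot z>0\}$. On the parabolic arc $w=z(N-2-z)$, $0<z<N-2$, one has $\dot z=0$ while $\dot w=(2-(q-1)z)w>0$ --- again using $\tfrac{2}{q-1}>N-2$ --- so the field points into $\Omega$; hence $\Omega$ is positively invariant, the orbit (which enters $\Omega$ immediately) keeps $\dot z>0$, and $z$ increases strictly. If $z$ stayed bounded it would tend to some $z_\infty\le N-2$ with $\dot z\to0$, forcing $w\to z_\infty(N-2-z_\infty)<\infty$; but on $\{z<N-2\}$ the factor $2-(q-1)z$ is positive, so $w$ is increasing and its finite limit would have to satisfy $(2-(q-1)z_\infty)w_\infty=0$, which is impossible. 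Thus $z\to+\infty$, and once $z>2(N-2)$ the estimate $\dot z\ge z^{2}-(N-2)z\ge\tfrac12 z^{2}$ forces, by Riccati comparison, blow-up at a finite $t_{*}$; then $R_*=e^{t_{*}}<\infty$.

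I expect the main obstacle to be the global phase-plane step rather than this final blow-up: one must exclude that the orbit stays bounded or drifts to the boundary rest point $(N-2,0)$, and it is precisely the hypothesis $m>2-\tfrac2N$, through $\tfrac{2}{q-1}>N-2$, that forces the invariance of $\Omega$ and the monotonicity of $w$ on $\{z<N-2\}$ and thereby the escape. By contrast, the finite-time blow-up is a routine Riccati comparison. In the sublinear range $m>2$ (so $q<1$) one can bypass the phase plane altogether: since $\psi$ decreases, \eqref{def-WOrd} gives $\psi'(r)\le-\tfrac{m-1}{Nm}\,r\,\psi^{q}(r)$, and integrating $\psi^{-q}\psi'$ yields $\psi^{1-q}(r)\le\alpha^{1-q}-\tfrac{(m-1)(1-q)}{2Nm}\,r^{2}$, whose right-hand side vanishes at a finite radius, again forcing $R_*<\infty$.
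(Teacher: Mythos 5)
Your proof is correct, and it shares the paper's skeleton --- pass to $s=\log r$ to obtain an autonomous planar system, trap the orbit in a region where the relevant quantity grows, and finish with a Riccati comparison --- but the two middle steps are executed genuinely differently. The paper uses the variables \eqref{u-v}, so its orbit emanates from the saddle $P_3=(N,0)$ of \eqref{v-ds}; yours are $(z,w)=(v,uv)$, with the orbit leaving the origin along the eigendirection $(1,N)$. The heart of the paper's proof (Step 2) parametrizes the trajectory by $\hat u$ (which requires proving $\hat u\le N$ and local monotonicity near $P_3$ via Hartman--Grobman) and compares $\hat v(\hat u)$ against the lines $(m-1)(1+\varepsilon)(N-\hat u)$, choosing $\varepsilon$ differently for $2-\tfrac2N<m<2$ and $m\ge 2$, before invoking the absence of stable points in the first quadrant to conclude $\hat v\to\infty$. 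You replace all of this by positive invariance of the nullcline region $\Omega=\{\dot z>0\}$ plus a monotone-limit argument; subcriticality enters only once, through $2/(q-1)>N-2$, with no case-split and no curve parametrization, which is arguably cleaner. The Riccati endings are essentially the same inequality in different coordinates ($\dot{\hat v}\ge(\hat v-(N-2))^2$ in the paper, $\dot z\ge\tfrac12 z^2$ for you), and your direct integration of $\psi^{-q}\psi'\le-\tfrac{m-1}{Nm}\,r$ for $m>2$ is an elementary shortcut the paper does not have.

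Two points should be tightened, both routine patches rather than gaps. First, in the bounded-$z$ case you assert $\dot z\to 0$ before knowing that $w$ converges; the clean order is: $z<N-2$ makes $w$ strictly increasing, so $w$ has a limit in $(0,\infty]$; if the limit is infinite then $\dot z\to\infty$, contradicting boundedness of $z$; if finite, then $(z,w)$ converges to a point which must be an equilibrium with $w_\infty>0$, and there are none. Second, you should note that the orbit stays in the open quadrant for all $t<\log R_*$ simply because $\psi>0$ and $\psi'<0$ there, so exits of $\Omega$ through the axes need not be considered when proving invariance.
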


\begin{proof} To prove that $R_*$ is finite, we need to study the phase plane of
the dynamical system associated to \eqref{le} in detail. Let us consider
the following transformation $(u,v)$ introduced in \cite{BU}
\begin{eqnarray}\label{u-v}
u(r) & = & - \frac{m-1}{m} \frac{r \psi^{\frac{1}{m-1}}(r)}
{\psi^{\prime}(r)} \quad {\rm and} \quad v(r) \ = \ - \frac{r
\psi^{\prime}(r)}{\psi(r)}.
\end{eqnarray}
After some straightforward computations, if $\psi(r)$ is the
solution obtained in Lemma \ref{lemma:local-existence} then
$(u,v)$ satisfies the dynamical system
\begin{equation}\label{sysuv}
  r\frac{d u}{dr} = u\left(N-u-\frac{v}{m-1}\right), \qquad
  r\frac{d v}{dr} = v(-(N-2) + u + v)\,
\end{equation}
with initial conditions $u(0)=N$ and $v(0)=0$ due to
\eqref{asymp-phi}. Note that the dependence on $\alpha$ disappears
due to the transformation \eqref{u-v}. The claim will follow by
showing that there exists $0<R_*<\infty$ satisfying
$$
\lim_{r \to R_*} v(r) = +\infty \qquad \mbox{ and that } \qquad
\sup_{0<r<R_*} |\psi^{\prime}(r)| < \infty\,.
$$
We divide the proof into four steps. We included Figure
\ref{figdynsys} to show a sketch of the different steps in the
proof of this Lemma.

\begin{figure}[ht]
\centering
\includegraphics[width=11cm]{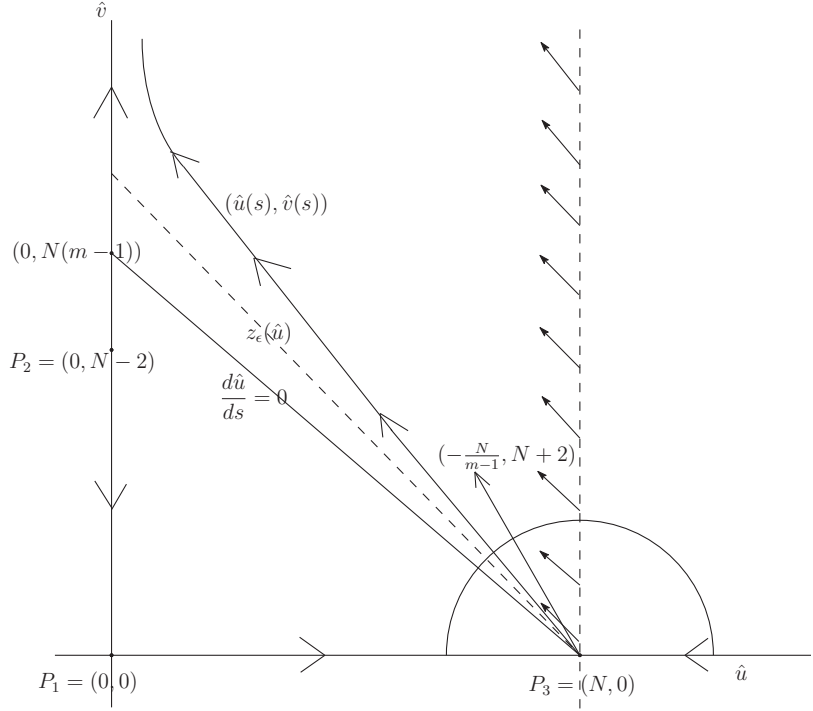}
\caption{Phase diagram associated to the dynamical system
\eqref{u-v-hat} with a sketch of the strategy of proof.}
\label{figdynsys}
\end{figure}

{\it Step 1: Autonomous System.} The system \eqref{sysuv} can
easily be made autonomous by introducing new time scales defining
$\hat{u}$ and $\hat{v}$ as
\begin{eqnarray}\label{u-v-hat}
\hat{u} (s)  =  u(e^s), \quad \hat{v} (s)  =   v(e^s), \quad
   \mbox{with } s  = \log r.
\end{eqnarray}
Then we have by a direct calculation that the solution obtained in
Lemma \ref{lemma:local-existence} leads to a solution
$(\hat{u}(s),\hat{v}(s))$ defined on $(-\infty,S_*)$ of
\begin{eqnarray}
  \frac{d \hat{u}}{ds}  &=& \hat{u}\left(N-\hat{u}-\frac{\hat{v}}{m-1}\right), \qquad
  \frac{d \hat{v}}{ds}  \ = \ \hat{v}(-(N-2) + \hat{u} +
  \hat{v})\,,
\label{v-ds}
\end{eqnarray}
such that
$$
\lim_{s\to-\infty}\hat{u}(s)=N \qquad \mbox{and} \qquad
\lim_{s\to-\infty}\hat{v}(s)=0
$$
with $R_*=e^{S_*}$. It is trivial to check that both axis
$\hat{u}=0$ and $\hat{v}=0$ are all parts of trajectories of
solutions to \eqref{v-ds}. Therefore, the first quadrant, i.e.,
the set $\hat{u}\geq 0$ and $\hat{v}\geq 0$, is invariant for the
dynamical system. This autonomous dynamical system has three
stationary points in the first quadrant given by
\begin{equation*}
P_1  =  (0,0), \quad P_2 =   (0,N-2), \quad P_3 =  (N,0) .
\end{equation*}
since the fourth root of the system does not belong to the first
quadrant for $m>2-\tfrac{2}{N}$. The jacobian of the autonomous
system is
$$
J(\hat{u},\hat{v})=\left(\begin{array}{cc}
N-2\hat{u}-\frac1{m-1}\hat{v} & -\frac1{m-1}\hat{u} \\
\hat{v} & -(N-2)+\hat{u}+2\hat{v}
\end{array}\right)\,,
$$
then we deduce that $P_1$ and $P_3$ are unstable saddle points and
$P_2$ is a unstable nodal point. In fact, the eigenvalues are:
$\{N,-(N-2)\}$ for $P_1$, $\{(N-\frac{N-2}{m-1},N-2\}$ for $P_2$,
and $\{-N,2\}$ for $P_3$. Since the solution of interest verifies
that
\begin{eqnarray*}
(\hat{u}(s), \hat{v}(s)) & \longrightarrow & P_3 \qquad {\rm as} \
s \to -\infty\,,
\end{eqnarray*}
then we need to check the local dynamics around $P_3$. Computing
eigenvectors associated to the eigenvalues of $J(P_3)$, we obtain
$(1,0)$ for the eigenvalue $-N$ and $(-\frac{N}{m-1},N+2)$ for
the eigenvalue $2$. The Hartman-Grosman theorem implies that
locally near $P_3$ the dynamics of \eqref{v-ds} are similar to the
linearized dynamics. Since the second eigenvector points
north-west at $P_3$ and it corresponds to the unstable manifold,
we deduce that there exists $\tilde s>0$ large enough such that
\begin{equation}\label{localdec}
\frac{d \hat{u}}{ds}(s)<0 \quad \mbox{ for } s \leq -\tilde s\,,
\end{equation}
for the solution corresponding to $\psi(r)$.
\vspace{2mm} \\
{\it Step 2: $\hat{u}$ is strictly decreasing and $\hat{v}$
diverges as $s\to S_*$.} With this aim, we show first
\begin{equation}\label{n-le-N}
  \hat{u}(s) \leq N \qquad {\rm for \ all \ } s \in (-\infty,S_*).
\end{equation}
Let us prove \eqref{n-le-N} by contradiction. If there exists
$s_0<S_*$ such that $\hat{u}(s_0) > N$ then take
$$
\tilde s_0 =\inf\{s<S_* \mbox{ such that } \hat{u}(s) > N\}.
$$
By construction and \eqref{localdec},
it verifies $-\infty<\tilde s_0\leq s_0<S_*$,
$\hat{u}(\tilde s_0) = N$, and $\frac{d \hat{u}}{ds}(\tilde{s_0})\geq 0$
since otherwise it will contradict its
definition. Since $-\infty<\tilde s_0$ by \eqref{localdec}, we
know that $\hat{v}(\tilde s_0)>0$. This contradicts the equation
for $\hat{u}$ in \eqref{v-ds} since it is easy to check that in
the vertical line $(N,\hat{v})$ with $\hat{v}>0$, the value of
$\frac{d \hat{u}}{ds}$ is strictly negative. This shows the claim
\eqref{n-le-N}.

The next idea is to show that the solution associated to $\psi(r)$
never touches the straight line $N-\hat{u}-\frac{\hat{v}}{m-1}=0$
and thus $\frac{d \hat{u}}{ds}$ is strictly negative for all $s\in
(-\infty,S_*)$. Let us introduce the comparison function
$z_{\varepsilon}$ defined by
\begin{eqnarray*}
z_{\varepsilon} (\hat{u}) = (m-1)(1+ \varepsilon)(N-\hat{u})\,,
\end{eqnarray*}
with $\varepsilon >0$ to be chosen later. Since $N \ge 3$ and $m
\geq 2- \frac{2}{N}$, we have
\begin{equation*}
z_{\varepsilon} (0) = (m-1)N(1 + \varepsilon) \ge (N-2)(1 +
\varepsilon) > N-2.
\end{equation*}
By defining the function $f$ as
$$
 f(\hat{u}, \hat{v})
    = \frac{\hat{v} (-(N-2) + \hat{u} + \hat{v}) }
    {\hat{u} (N-\hat{u} - \frac{1}{m-1} \hat{v}) }\,,
$$
then the curve $(\hat{u}(s),\hat{v}(s))$ can parameterized in
terms of $\hat{u}$ due to \eqref{localdec} in a time interval
$(-\infty,\tilde s]$ for small enough $\tilde s$ and it satisfies
\begin{eqnarray} \label{vu-f}
  \frac{d\hat{v}}{d \hat{u}}
    = f(\hat{u}, \hat{v}) \qquad \mbox{ with } \hat{v}(N)=0\,.
\end{eqnarray}
After some computations, one can verify that
\begin{eqnarray} \label{fz-zprime}
  f(\hat{u}, z_{\varepsilon})
  \le  \frac{dz_{\varepsilon} (\hat{u})}{d \hat{u}}\,,
\end{eqnarray}
where $\varepsilon$ is chosen as
\begin{eqnarray*}
\varepsilon = \left\{
\begin{array}{lll}
 \displaystyle \frac{2-m}{m} & \quad {\rm for} \ 2-\frac{2}{N} < m < 2,
 \\[3mm]
 \displaystyle \frac2N & \quad {\rm for} \ m \ge 2.
\end{array}
\right.
\end{eqnarray*}
Indeed, (\ref{fz-zprime}) is equivalent to
\begin{eqnarray} \label{equiv-fz}
(2-m)\hat{u} +  (m-1)N \ge N-2 + \varepsilon  \Big( m\hat{u} -
(m-1)N \Big).
\end{eqnarray}
In case $2-\frac{2}{N} < m < 2$, a direct calculation shows
(\ref{equiv-fz}). In case $m \ge 2$, (\ref{equiv-fz}) results from
$$
(2-m)\hat{u} +  (m-1)N \ge (2-m)N +  (m-1)N =N \geq N-2 +
\varepsilon \Big( m\hat{u} - (m-1)N \Big)
$$
since $\hat{u}\leq N$ from \eqref{n-le-N}.
Based on (\ref{vu-f}),
(\ref{fz-zprime}), and $z_{\varepsilon}(N)=0$, we find by the
comparison principle for first-order ODEs that $\hat{v}(\hat{u})
\ge z_{\varepsilon}(\hat{u})$ in the interval $(\hat{u}(\tilde
s),N)$. It is obvious that the argument can be now continued as
long as the solution $\hat{u}$ exists, and thus $\hat{v}(\hat{u})
\ge z_{\varepsilon}(\hat{u})$ in the interval $(\hat{u}(\tilde
s),N)$ for all $s\in (-\infty,S_*)$. This implies that the
solution never touches the straight line
$N-\hat{u}-\frac{\hat{v}}{m-1}=0$ as claimed, and thus
\begin{equation*}
\frac{d \hat{u}}{ds}(s)< 0 \quad \mbox{and} \quad
  \hat{u}(s) < N \qquad {\rm for \ all \ } s \in (-\infty,S_*).
\end{equation*}
On the other hand, since there is no stable point inside the
first quadrant, we finally find
\begin{eqnarray*}
\lim_{s\to S_*}\hat{v}(s) = +\infty\,,
\end{eqnarray*}
and in particular there exists $0<\bar s<\infty$ such that
\begin{eqnarray}\label{vs-star}
\hat{v}(\bar s) > N-2.
\end{eqnarray}
{\it Step 3: $S_*<\infty$.} Indeed, (\ref{vs-star}) implies that
\begin{align*}
\Big( \frac{d\hat{v}}{ds} \Big)(\bar s) & = \hat{v}(\bar s)
(-(N-2) + \hat{u}(\bar s) + \hat{v}(\bar s)) \ge \hat{v}(\bar s)
(\hat{v}(\bar s)-(N-2))
\nonumber \\
& \ge (\hat{v}(\bar s)-(N-2))^2 > 0.
\end{align*}
Hence, there exists $s^{\prime}>\bar s$ such that
\begin{eqnarray*}
\hat{v}(s) & > & N-2, \quad \Big( \frac{d\hat{v}}{ds} \Big)(s) \
\ge \ (\hat{v}(s)-(N-2))^2 \ > \ 0 \qquad {\rm for \ all \ } \bar
s < s < s^{\prime}.
\end{eqnarray*}
Repeating this procedure, we find that
\begin{eqnarray*}
\hat{v}(s) & > & N-2,
\quad
\Big( \frac{d\hat{v}}{ds} \Big)(s)
\ \ge \ (\hat{v}(s)-(N-2))^2 \ > \ 0
 \qquad {\rm for \ all \ } \bar s < s < S_*\,.
\end{eqnarray*}
On the other hand, we consider the following ordinary differential equation:
\begin{eqnarray*}
\frac{d \hat{w}} {ds} & = & (\hat{w}(s) - (N-2))^2, \qquad
\hat{w}(\bar s) = \hat{v}(\bar s).
\end{eqnarray*}
Since it holds that
\begin{eqnarray*}
\hat{w}(s)
 & = & \frac{\hat{w}(\bar s)-(N-2)}{1+(\bar s-s)(\hat{w}(\bar s)-(N-2))} + N-2,
\end{eqnarray*}
we find that
\begin{eqnarray*}
\hat{w}(s) & \longrightarrow & \infty \qquad {\rm as} \ \  s \to
\frac{1+\bar s(\hat{w}(\bar s)-(N-2))}{\hat{w}(\bar s)-(N-2)}
=\bar S \,.
\end{eqnarray*}
Therefore, the comparison principle yields that $S_*\leq \bar
S<\infty$.
\vspace{2mm} \\
{\it Step 4: $\psi(R_*)=0$.} The previous step already shows that
$R_*<\infty$ but to finish the proof we need to get the control of
$\psi^\prime(r)$ to derive finally that $\psi(R_*)=0$. To this
end, we go back to (\ref{phi-radial}) to get
\begin{eqnarray*}
[r^{N-1} \psi^{\prime}(r)]_{0}^{r}
\ = \
\int_0^{r} \Big(s^{N-1} \psi^{\prime}(s) \Big)^{\prime} \ ds
 & = &  -\frac{m-1}{m}
            \int_0^{r} s^{N-1} \psi^{\frac{1}{m-1}}(s) \ ds,
\end{eqnarray*}
on $0<r<R_*$. Thus, we have
\begin{equation*} 
r^{N-1} \left| \psi^{\prime}(r) \right|
 = \frac{m-1}{m}
           \left|
                \int_0^{r} s^{N-1} \psi^{\frac{1}{m-1}}(s) \ ds
           \right| \leq \frac{(m-1)r^N\alpha^{\frac{1}{m-1}}}{mN}
           \,
\end{equation*}
since $\psi(r) < \psi(0) = \alpha$ due to $\psi^{\prime}(r) <0$ in
Lemma \ref{lemma:local-existence}. Thus we conclude
\begin{equation*}
\sup_{0<r<R_*} \left| \psi^{\prime}(r) \right| \le
\frac{(m-1)R_*\alpha^{\frac{1}{m-1}}}{mN} \,,
\end{equation*}
completing the proof of Lemma \ref{lemma:local-existence2}.
\end{proof}

Let us notice, as in \cite[Lemma 15]{S-Z}, that the previous lemma
immediately implies that the mapping between the initial value
$\alpha$ and the mass $M$ is bijective if $m>2-\frac{2}{N}$.

\begin{cor}
\label{cor:M-alpha} Let $N \ge 3$ and $m>2-\frac{2}{N}$. For every
$M>0$, there exist a unique $\alpha=\alpha(M)>0$ such that the
solution $\psi$ of \eqref{le} given by Lemma
{\rm\ref{lemma:local-existence}} fulfills that
\begin{eqnarray*}
\omega_N \int_0^{R_*(\alpha)} \psi^{\frac{1}{m-1}}(s) s^{N-1} \ ds
& = & M.
\end{eqnarray*}
\end{cor}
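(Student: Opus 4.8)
The plan is to exploit the scaling invariance of the Lane-Emden equation \eqref{le} to reduce everything to a single reference solution and to obtain an explicit power law for the mass as a function of $\alpha$. First I would fix the solution $\Psi$ corresponding to $\alpha=1$ given by Lemmas \ref{lemma:local-existence} and \ref{lemma:local-existence2}, so that $\Psi$ is positive and decreasing on $(0,\bar R)$ with $\bar R := R_*(1)<\infty$ and $\Psi(\bar R)=0$. The key observation is that the homogeneous nonlinearity $\psi^{1/(m-1)}$ forces the two-parameter rescaling $r\mapsto \nu r$, $\psi\mapsto \mu\psi$ to preserve \eqref{le} precisely when $\nu^2=\mu^{(2-m)/(m-1)}$. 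Setting $\mu=\alpha$ and $\beta:=\frac{2-m}{2(m-1)}$, so that $\nu=\alpha^{\beta}$, I would define $\psi_\alpha(r):=\alpha\,\Psi(\alpha^{\beta}r)$ and verify by direct substitution that $\psi_\alpha$ solves \eqref{le} with $\psi_\alpha(0)=\alpha$ and $\psi_\alpha'(0)=0$.

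By the uniqueness part of Lemma \ref{lemma:local-existence}, $\psi_\alpha$ must then coincide with the solution associated to the initial value $\alpha$. In particular, since $\psi_\alpha(r)=0$ exactly when $\alpha^{\beta}r=\bar R$, the support radius obeys the explicit relation $R_*(\alpha)=\bar R\,\alpha^{-\beta}$. Next I would compute the mass $M(\alpha):=\omega_N\int_0^{R_*(\alpha)}\psi_\alpha^{1/(m-1)}(s)\,s^{N-1}\,ds$ by the change of variables $t=\alpha^{\beta}s$. The upper limit becomes $\bar R$, and collecting the powers of $\alpha$ coming from $\psi_\alpha^{1/(m-1)}=\alpha^{1/(m-1)}\Psi^{1/(m-1)}(\alpha^{\beta}\,\cdot\,)$ together with $s^{N-1}\,ds=\alpha^{-\beta N}t^{N-1}\,dt$, this yields
$$
M(\alpha)=M(1)\,\alpha^{\gamma},\qquad \gamma:=\frac{1}{m-1}-\beta N=\frac{N(m-2)+2}{2(m-1)},
$$
where $M(1)=\omega_N\int_0^{\bar R}\Psi^{1/(m-1)}(t)\,t^{N-1}\,dt\in(0,\infty)$ by Lemma \ref{lemma:local-existence2}.

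Finally, I would observe that the standing hypothesis $m>2-\frac{2}{N}$ is equivalent to $N(m-2)+2>0$, so that $\gamma>0$. Hence $\alpha\mapsto M(\alpha)=M(1)\alpha^{\gamma}$ is continuous and strictly increasing, with $M(\alpha)\to 0$ as $\alpha\to 0^{+}$ and $M(\alpha)\to\infty$ as $\alpha\to\infty$; it is therefore a bijection of $(0,\infty)$ onto itself. This gives, for each $M>0$, a unique $\alpha=\alpha(M)=\bigl(M/M(1)\bigr)^{1/\gamma}$ with the required mass identity.

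The argument is essentially computational, and I do not anticipate any genuinely hard estimate. The only points that require care are the correct bookkeeping of the scaling exponents (in particular the sign of $\beta$ and the cancellation producing $\alpha^{-\beta N}$ in the measure) and the appeal to uniqueness in Lemma \ref{lemma:local-existence} to identify $\psi_\alpha$ with the genuine solution at initial height $\alpha$, which is what converts the formal scaling symmetry into the exact power law for $M(\alpha)$.
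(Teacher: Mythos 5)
Your proposal is correct and follows essentially the same route as the paper: the paper's proof also rescales the reference solution $\Psi$ with $\Psi(0)=1$ as $\psi(r)=\alpha\Psi(\alpha^{\mu}r)$, $\mu=\frac{2-m}{2(m-1)}$ (your $\beta$), derives $R_*(\alpha)=\alpha^{-\mu}R_*(1)$, and obtains the mass as $\alpha^{\frac{1}{m-1}-\mu N}$ times a fixed constant, concluding from $\frac{1}{m-1}-\mu N>0$ in the diffusion-dominated regime. Your write-up merely makes explicit a few points the paper leaves implicit (the appeal to uniqueness in Lemma \ref{lemma:local-existence}, the finiteness of $M(1)$ via Lemma \ref{lemma:local-existence2}, and the bijectivity of $\alpha\mapsto M(\alpha)$).
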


\begin{proof}
Given the solution $\Psi$ of \eqref{le} with the initial data
$\Psi(0)=1$, one can easily check that $\psi$ defined as
$$
\psi(r) := \alpha \Psi(\alpha^\mu r) \quad \mbox{with} \
\mu=\frac{1}{2}\left(\frac{2-m}{m-1}\right)\,,
$$
for every $\alpha >0$ is the solution of \eqref{le} with
$\psi(0)=\alpha$. This readily implies $R_*(\alpha)=\alpha^{-\mu}
R_*(1)$. For $\alpha>0$, we define $M_1(\alpha)$ by
$$
M_1(\alpha) := \omega_N \int_0^{R_*(\alpha)} \left[\alpha
\Psi(\alpha^\mu r)\right]^{\frac{1}{m-1}} r^{N-1} \ dr= \omega_N
\alpha^{\frac{1}{m-1}-\mu N} \int_0^{R_*(1)}
\Psi(s)^{\frac{1}{m-1}} s^{N-1} \ ds.
$$
Since $\frac{1}{m-1}-\mu N>0$ under our assumptions, then we
deduce the result.
\end{proof}

{\bf Remark.}
Lemmas
{\rm\ref{lemma:local-existence}-\ref{lemma:local-existence2}} are
perfectly valid for the critical case $m=2-\tfrac{2}{N}$. However,
previous corollary implies that all stationary solutions have
equal mass since $\frac{1}{m-1}-\mu N=0$ in this case. Therefore,
there are infinitely many stationary radial solutions only for a
single value of the total mass $M$, the critical mass, as already
proved in {\rm\cite{BCL}}.


\section{Global minimizers: Existence, Uniqueness, and Qualitative Properties.}
\label{section:existence-stationary-point}
\setcounter{equation}{0}

In this section we will show that global minimizers of the free energy \eqref{def-EU} exist in an adequate functional space, and we will characterize them by a nonlocal nonlinear integral equation. This equation in turn will give us radial symmetry, compact support and uniqueness up to translations of the global minimizer of the free energy, since it has to coincide up to translations with the unique solution of the Lane-Emden equation with a given mass obtained in previous Section. This complements the variational information obtained by different methods in \cite{Kim-Yao} using \cite{LY}. We do a direct proof of this fact without resorting to techniques in \cite{LY}. Let us start by defining the functional space
\begin{equation*}
\mathcal{Y}_M := \{U \in L^1 \cap L^m(\R^N);
      \ \|U\|_1 = M, \ U(x) \ge 0 \quad
{\rm for \ a.e.} \   x \in \R^N\}\,.
\end{equation*}
The main challenge is how to obtain control of the confinement of mass for minimizing sequences due to the translational invariance of the energy functional \eqref{def-EU}. We avoid to do this in the whole space $\R^N$ by following a strategy used in \cite{Strohmer,S-Z,CCV} in the three dimensional case. We first prove that the free energy is bounded below that together with compactness arguments will show the existence of global minimizers in the restricted functional setting of compactly supported functions given by
\begin{eqnarray*}
\mathcal{Y}_{M,R} & = & \{U \in \mathcal{Y}_M; \ U(x) =0 \quad
      {\rm for \ a.e.} \   |x| \ge R\}
\end{eqnarray*}
for all $R>0$. Calculus of variations arguments applied to the free energy gives us a necessary condition on global minimizers by a nonlocal integral equation whose support is restricted to the ball of radius $R$. At this point decreasing rearrangement techniques imply the radial symmetry and the uniqueness up to translations of the global minimizers in $\mathcal{Y}_{M,R}$. This together with the careful analysis of the Lane-Emden system \eqref{le} in previous section allow us to pass to the limit $R\to \infty$ in this minimization procedure leading to uniqueness up to translations of the global minimizer in $\mathcal{Y}_{M}$.

\subsection{The free energy is bounded below in $\mathcal{Y}_M$}

We start by reminding an inequality obtained by interpolation from
the classical Hardy-Littlewood-Sobolev (HLS) inequality.

\begin{lemma} \label{lem:HLS}
Let $N \ge 3$ and let $m \geq 2-\frac{2}{N}$. For every $f,g \in
L^1 \cap L^m(\R^N)$, it holds that
\begin{eqnarray*}
\left| \int_{\R^N} \int_{\R^N} \frac{f(x)g(y)}{|x-y|^{N-2}} \ dxdy \right| & \le
& C_{HLS} \|f\|_1^{1-\theta} \|f\|_m^{\theta} \|g\|_1^{1-\theta}
\|g\|_m^{\theta}
\end{eqnarray*}
with $\theta=\frac{(N-2)m}{2N(m-1)}$, where $C_{HLS}$ is the sharp
constant of the HLS inequality.
\end{lemma}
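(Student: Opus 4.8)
The plan is to obtain the claimed bound by composing the sharp Hardy--Littlewood--Sobolev (HLS) inequality with the standard Lebesgue interpolation inequality; beyond a careful bookkeeping of exponents, no new idea is required. First I would recall the sharp HLS inequality in its symmetric (conformal) form: for $0<\lambda<N$ and the exponent $p$ determined by $\frac{2}{p}+\frac{\lambda}{N}=2$, one has
$$
\left|\int_{\R^N}\int_{\R^N}\frac{f(x)g(y)}{|x-y|^\lambda}\,dx\,dy\right|\le C_{HLS}\,\|f\|_p\,\|g\|_p\,.
$$
Specializing to the Newtonian exponent $\lambda=N-2$ (admissible since $0<N-2<N$ for $N\ge 3$) forces $p=\frac{2N}{N+2}$, and $C_{HLS}$ is then precisely the sharp constant quoted in the statement. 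Observe that $\frac{2N}{N+2}>1$ for $N\ge 3$, so the diagonal HLS inequality is genuinely applicable.

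Second, I would interpolate the $L^{2N/(N+2)}$ norm between $L^1$ and $L^m$. By the standard inequality $\|f\|_r\le\|f\|_1^{1-\theta}\|f\|_m^{\theta}$, valid whenever $\frac1r=(1-\theta)+\frac{\theta}{m}$ with $1\le r\le m$, I would take $r=\frac{2N}{N+2}$. Solving $\frac{N+2}{2N}=1-\theta\,\frac{m-1}{m}$ then yields exactly $\theta=\frac{(N-2)m}{2N(m-1)}$, the value appearing in the lemma, and the identical estimate holds for $g$ by symmetry. Substituting both interpolation bounds into the HLS estimate produces the asserted inequality.

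The only step that uses the hypothesis on $m$ --- and the point I would treat as the crux --- is verifying that $r=\frac{2N}{N+2}$ lies in the admissible range $[1,m]$, equivalently that $0<\theta<1$, so the interpolation is legitimate. Here I would invoke $m\ge 2-\frac{2}{N}$: a direct computation shows $2-\frac{2}{N}>\frac{2N}{N+2}$ for every $N\ge 3$, since the inequality reduces to $N>2$. Hence $\frac{2N}{N+2}<m$, the interpolation exponent is strictly interior, and $\theta\in(0,1)$ as required. Finally, because $f,g\in L^1\cap L^m(\R^N)$, the same interpolation guarantees $f,g\in L^{2N/(N+2)}(\R^N)$, so every quantity in the argument is finite and the chain of inequalities is fully justified.
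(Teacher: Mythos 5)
Your proposal is correct and takes essentially the same approach as the paper: both reduce to the sharp HLS inequality at the exponent $\frac{2N}{N+2}$ followed by $L^p$-interpolation of that norm between $L^1$ and $L^m$, with the admissibility check $m \ge 2-\frac{2}{N} > \frac{2N}{N+2}$. The only cosmetic difference is that the paper reaches the bilinear bound via H\"older's inequality combined with the convolution (operator) form of HLS, whereas you invoke the diagonal bilinear form directly; these are equivalent and carry the same sharp constant.
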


\begin{proof} H\"{o}lder inequality implies that
\begin{eqnarray}\label{hl-1}
\int_{\R^N} \int_{\R^N} \frac{f(x)g(y)}{|x-y|^{N-2}} \ dxdy & \le &
\|f\|_{\frac{2N}{N+2}} \||x|^{2-N}*g\|_{(\frac{2N}{N+2})^{\prime}}
\end{eqnarray}
with $p^{\prime}=\frac{p}{p-1}$. By the HLS inequality, there
exists a positive number $C_{HLS}=C_{HLS}(N)$ such that
\begin{eqnarray}\label{hl-2}
\||x|^{2-N}*g\|_{(\frac{2N}{N+2})^{\prime}} \ \le \ C_{HLS}
\|g\|_{\frac{2N}{N+2}}.
\end{eqnarray}
Since $1 < \frac{2N}{N+2} < m$, implied by $m \geq 2-\frac{2}{N} >
\frac{2N}{N+2}$, then interpolation in $L^p$-spaces gives
\begin{eqnarray}\label{hl-3}
\|g\|_{\frac{2N}{N+2}} & \le & \|g\|_1^{1-\theta}
\|g\|_m^{\theta}.
\end{eqnarray}
Putting together (\ref{hl-1}), (\ref{hl-2}), and (\ref{hl-3}), we
find the desired result.
\end{proof}

We now show that $E[U]$ is well-defined and bounded below for all
$U \in L^1 \cap L^m(\R^N)$ for $m > 2-\frac{2}{N}$ with $N \ge 3$.

\begin{prop} \label{prop:muM}
Let $N \ge 3$ and $m > 2-\frac{2}{N}$. For every $M>0$, it holds
that $E[U]<\infty$ for all $U \in \mathcal{Y}_M$ and
\begin{eqnarray*}
\mu_M & := &
\displaystyle
\inf_{U \in \mathcal{Y}_M} E[U]  \ > \ -\infty.
\end{eqnarray*}
\end{prop}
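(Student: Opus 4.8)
The plan is to use the interpolation-HLS inequality from Lemma~\ref{lem:HLS} to control the nonlocal attractive term in the free energy by the diffusion term, exploiting that $m>2-\frac2N$ puts us in the diffusion-dominated regime. First I would observe that for $U\in\mathcal{Y}_M$, the diffusion term $\frac{1}{m-1}\int_{\R^N}U^m\,dx$ is finite and nonnegative since $U\in L^m$. For the interaction term, I would apply Lemma~\ref{lem:HLS} with $f=g=U$ to obtain
\begin{eqnarray*}
\frac12\int_{\R^N}\int_{\R^N}\Gamma(x-y)U(x)U(y)\,dxdy
&\le& \frac{C_{HLS}}{2(N-2)\omega_{N-1}}\,\|U\|_1^{2(1-\theta)}\|U\|_m^{2\theta}
\ = \ C\,M^{2(1-\theta)}\|U\|_m^{2\theta},
\end{eqnarray*}
where $\theta=\frac{(N-2)m}{2N(m-1)}$ and $C$ absorbs the dimensional constants. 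This shows $E[U]>-\infty$ for each fixed $U$, hence $E[U]$ is well-defined and finite on $\mathcal{Y}_M$.

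The key step is to verify that the exponent $2\theta$ on $\|U\|_m$ is strictly less than $m$, which is precisely where the hypothesis $m>2-\frac2N$ enters. I would compute
$$
2\theta \ = \ \frac{(N-2)m}{N(m-1)} \ < \ m
\qquad\Longleftrightarrow\qquad
\frac{N-2}{N(m-1)}<1
\qquad\Longleftrightarrow\qquad
m>\frac{N-2}{N}+1=2-\frac2N,
$$
so under our standing assumption $2\theta<m$ holds strictly. Writing $X:=\|U\|_m^m\ge 0$, the free energy is bounded below by
$$
E[U]\ \ge\ \frac{1}{m-1}X - C\,M^{2(1-\theta)}X^{2\theta/m},
$$
and since $2\theta/m<1$, the function $t\mapsto \frac{1}{m-1}t - C M^{2(1-\theta)}t^{2\theta/m}$ is bounded below on $[0,\infty)$ by elementary calculus (it is coercive: the linear term dominates the sublinear one as $t\to\infty$, and the expression is continuous on the compact-at-the-origin part). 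Taking the infimum of this one-variable lower bound over $t\ge0$ gives an explicit finite constant depending only on $M$, $m$, $N$, and $C_{HLS}$, establishing $\mu_M>-\infty$.

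I do not anticipate a serious obstacle here; the content is almost entirely bookkeeping once Lemma~\ref{lem:HLS} is in hand. The only point requiring care is the strictness of the inequality $2\theta/m<1$, since an equality (the fair-competition case $m=2-\frac2N$) would reduce the bound to $\bigl(\frac{1}{m-1}-CM^{2(1-\theta)}\bigr)X$, whose sign—and hence the boundedness below—would depend delicately on the mass $M$ relative to the sharp HLS constant. Thus the mild subtlety is simply ensuring we are in the strict diffusion-dominated regime so that coercivity of the scalar lower bound holds for \emph{all} masses $M>0$, which is exactly the distinction this paper emphasizes against the critical case treated in~\cite{BCL}.
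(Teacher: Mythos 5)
Your proof is correct, and it reaches the conclusion by a genuinely different (and shorter) route than the paper. Both arguments start from Lemma~\ref{lem:HLS} applied with $f=g=U$, and your verification that $2\theta<m$ is exactly equivalent to $m>2-\frac{2}{N}$ is right, as is your closing remark about what breaks at the critical exponent. The difference lies in how the lower bound is extracted. You work at the level of norms: setting $X=\|U\|_m^m$, you use the interpolated bound (the analogue of \eqref{E-2}) to get $E[U]\ge \frac{1}{m-1}X - C M^{2(1-\theta)}X^{2\theta/m}$ and conclude by coercivity of a one-variable function whose negative part is sublinear. The paper instead works pointwise: starting from the bound at the critical exponent \eqref{E-1}, it writes $E[U]\ge \int_{\R^N}\bigl(f_1(U)-f_2(U)\bigr)\,dx$ with $f_1(r)=\frac{1}{m-1}r^m$ and $f_2(r)=\frac{C_{HLS}}{2(N-2)\omega_{N-1}}M^{2/N}r^{2-2/N}$, splits $\R^N$ along the sublevel set $A=\{U\le R_0\}$ where $R_0$ is the crossing point of $f_1$ and $f_2$, discards the positive contribution on the complement, and on $A$ uses $U^{2-2/N}\le R_0^{1-2/N}U$ together with the mass constraint to obtain an explicit constant of the form $-C\,M^{1+2/N}R_0^{1-2/N}$. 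What your route buys: it is more direct, and it gives as an immediate by-product the coercivity statement that bounded energy implies bounded $L^m$ norm; this is precisely the fact the paper has to re-derive separately, via the factorization of \eqref{E-2}, at the start of the proof of Lemma~\ref{lemma:ex-minimizer} to bound minimizing sequences. What the paper's route buys: the pointwise splitting makes transparent that only the region where $U$ is small (below $R_0$) can contribute negative energy, with the loss there controlled purely by the mass, and the resulting constant is fully explicit in $M$, $N$, $m$, $C_{HLS}$.
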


\begin{proof}
Given $m > 2-\frac{2}{N}$ and taking into account Lemma
\ref{lem:HLS} for exponent $2-\frac{2}{N}$, we get that
\begin{equation} \label{ent-inff4}
\left|\int_{\R^N} U(x)V(x) \ dx\right| \le \frac{C_{HLS} }{(N-2)
\omega_{N-1}}
   \|U\|_1^{2/N} \| U \|_{2-\tfrac{2}{N}}^{2-\tfrac{2}{N}} \leq \frac{C_{HLS} }{(N-2)
\omega_{N-1}}
   \|U\|_1^{2-2\theta} \|U\|_m^{2\theta}
\end{equation}
for all $U \in L^1 \cap L^m(\R^N)$, and thus $E[U]<\infty$ for all
$U\in \mathcal{Y}_M$. Using (\ref{ent-inff4}) into the free energy
definition (\ref{def-EU}), we deduce
\begin{align}
E[U] & \ge
 \frac{1}{m-1} \|U\|_m^m \ - \  \frac{C_{HLS} }{2(N-2) \omega_{N-1}}
     \|U\|_1^{2/N} \|U\|_{2-\tfrac{2}{N}}^{2-\tfrac{2}{N}} \label{E-1}\\
& \ge
 \frac{1}{m-1} \|U\|_m^m \ - \  \frac{C_{HLS} }{2(N-2) \omega_{N-1}}
     \|U\|_1^{2-2\theta} \|U\|_m^{2\theta} \label{E-2}
\end{align}
for all $U \in L^1 \cap L^m(\R^N)$ and $m > 2-\frac{2}{N}$. Let us
consider the functions $f_1$ and $f_2$ such as
\begin{eqnarray*}
f_1(r) & = & \frac{1}{m-1} r^m \quad {\rm and} \quad
f_2(r) \ = \ \frac{C_{HLS} }{2(N-2) \omega_{N-1}} M^{\frac{2}{N}} r^{2-\frac{2}{N}}.
\end{eqnarray*}
It should be noted that since $m > 2-\frac{2}{N}$, there exists
$R_0>0$ such that
\begin{eqnarray}\label{EU-lowerbd-3}
\left\{
\begin{array}{llll}
f_1(r) & > & f_2(r) \quad & {\rm for \ all} \ r > R_0, \\
f_1(r) & < & f_2(r) \quad & {\rm for \ all} \ 0 < r < R_0.
\end{array}
\right.
\end{eqnarray}
Using the decomposition $\R^N=A\cup(\R^N/A)$ with $A = \{x \in
\R^N; U(x) \leq R_0\}$, we have by (\ref{E-1}) that
\begin{equation*}
E[U] \ge \int_A \big( f_1(U(x)) -f_2(U(x)) \big) \ dx +
\int_{\R^N/A} \big( f_1(U(x)) -f_2(U(x)) \big) \ dx\,,
\end{equation*}
for all $U\in \mathcal{Y}_M$. Since the second term in the right-hand side
is positive and $f_1 \ge f_2$, it holds by (\ref{EU-lowerbd-3}) that
\begin{align*}
E[U] & \ge \int_A \big( f_1(U(x)) -f_2(U(x)) \big) \ dx \ge -
\int_A \frac{C_{HLS} }{2(N-2) \omega_{N-1}}
                                M^{\frac{2}{N}} U^{2-\frac{2}{N}}(x) \ dx
\nonumber \\
& \ge - \frac{C_{HLS} }{(N-2) \omega_{N-1}}
      M^{\frac{2}{N}} R_0^{1-\frac{2}{N}}  \int_A U(x) \ dx
\ge - \frac{C_{HLS} }{(N-2) \omega_{N-1}} M^{\frac{2}{N}+1}
R_0^{1-\frac{2}{N}}\,,
\end{align*}
for all $U \in \mathcal{Y}_M$, proving the bound from below.
\end{proof}

Even if the free energy \eqref{def-EU} is bounded from below in $\mathcal{Y}_M$, we do not know how to show that minimizing sequences are compact in $\mathcal{Y}_M$ due to the lack of control of the escape of mass at infinity, compared to \cite{CCV} in the two dimensional case.
On the other hand, it is very easy to check due to Lemma \ref{lem:HLS} that $E[U]$ is a continuous functional with respect to the strong convergence in $L^1 \cap L^m(\R^N)$. In fact, this allows us to approximate the minimization problem in the whole class $\mathcal{Y}_M$ by restricting to compactly supported functions. To avoid the lack of control in the mass at infinity, we will first minimize the free energy among compactly supported densities in $\mathcal{Y}_{M,R}$. Let us define $\mu_{M,R}$ by
\begin{eqnarray*}
\mu_{M,R} & := &  \inf_{U\in \mathcal{Y}_{M,R}} E[U].
\end{eqnarray*}
Observe that Proposition \ref{prop:muM} implies that $\mu_{M,R} \ge \mu_M \ > \ -\infty$, it is a decreasing function of $R$ by construction, and because of the continuity of $E[U]$ in $L^1 \cap L^m(\R^N)$, we claim that
\begin{equation}\label{limmins}
\lim_{R\to\infty} \mu_{M,R} = \mu_M\,.
\end{equation}
To show that, it suffices to take a minimizing sequence $\{U_k\}_{k\in\N} \subset \mathcal{Y}_M$, so that $ \lim_{k\to \infty} E(U_k)=\mu_M$. We know that
$$
\lim_{R\to \infty} \left\|U_{k,R}-U_k\right\|_p=0 \quad \mbox{with } U_{k,R}:= U_k \frac{M}{\|U_k\|_{L^1(B_R)}}\chi_{B_R}\in \mathcal{Y}_{M,R}\,,
$$
for all $k\in\N$ and all $1\leq p\leq m$ by dominated convergence theorem. The continuity of $E$
implies that
$$
\lim_{R\to \infty} \mu_{M,R} \leq \lim_{R\to \infty} E[U_{k,R}] = E[U_k]
$$
for all $k\in\N$. So we conclude that
$$
\mu_{M} \leq \lim_{R\to \infty} \mu_{M,R}\leq \mu_{M}\,.
$$

\subsection{Global minimizers of the free energy in balls}

We show first the existence of a radial minimizer of $E$ in $\mathcal{Y}_{M,R}$ and that all global minimizers are radial. Note that we do not know yet any uniqueness of radial global minimizer.

\begin{lemma} \label{lemma:ex-minimizer}
Let $N \ge 3$ and $m > 2-\frac{2}{N}$.
For every $M>0$ and $R>0$, there exists a radial function $U_R \in \mathcal{Y}_{M,R}$ such that
\begin{eqnarray} \label{ex-minimizer-1}
E[U_R] & = & \mu_{M,R}\,,
\end{eqnarray}
for which $V_R=\Gamma\ast U_R \in L_{loc}^{\frac{m}{m-1}}(\R^N)$. Moreover, all global minimizers are of the form $\tilde{U}_R(x+y)$ for all $y\in \R^N$ such that $\tilde{R}_o+|y|\leq R$ with $\tilde{U}_R$ being a radial global minimizer of $E$ in $\mathcal{Y}_{M,R}$ with support $\bar{B}_{\tilde{R}_o}$, $0<\tilde{R}_0\leq R$.
\end{lemma}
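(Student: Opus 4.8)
The plan is to construct $U_R$ by the direct method of the calculus of variations, the key being that the exponent condition $m>2-\frac2N$ renders the energy coercive in $L^m$. First I would take a minimizing sequence $\{U_k\}\subset\mathcal{Y}_{M,R}$ with $E[U_k]\to\mu_{M,R}$. Using the lower bound \eqref{E-2} together with $\|U_k\|_1=M$,
$$
E[U_k]\ \ge\ \frac{1}{m-1}\|U_k\|_m^m-\frac{C_{HLS}}{2(N-2)\omega_{N-1}}\,M^{2-2\theta}\,\|U_k\|_m^{2\theta},
$$
and since $2\theta=\frac{(N-2)m}{N(m-1)}<m$ exactly when $m>2-\frac2N$, the right-hand side is coercive and $\{U_k\}$ is bounded in $L^m(B_R)$. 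Along a subsequence $U_k\rightharpoonup U_R$ weakly in $L^m(B_R)$; because $B_R$ has finite measure this also gives weak $L^1$ convergence, so the constraints pass to the limit ($U_R\ge0$ a.e., $U_R\equiv0$ a.e. outside $B_R$, and $\|U_R\|_1=M$) and $U_R\in\mathcal{Y}_{M,R}$.

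Next I would check that $E$ is weakly lower semicontinuous along this sequence. The diffusion term $\frac1{m-1}\|U\|_m^m$ is convex, hence weakly lower semicontinuous. Writing the attractive term as the pairing $\int_{B_R}U_k\,V_k$ with $V_k:=\Gamma\ast U_k$, I would invoke the compactness of the Newtonian potential operator $U\mapsto(\Gamma\ast U)|_{B_R}$ from $L^m(B_R)$ into $L^{\frac{m}{m-1}}(B_R)$: it yields $V_k\to V_R$ strongly in $L^{\frac{m}{m-1}}(B_R)$, so the weak-strong pairing with $U_k\rightharpoonup U_R$ makes the attractive term converge. Therefore $E[U_R]\le\liminf_k E[U_k]=\mu_{M,R}$, while $U_R\in\mathcal{Y}_{M,R}$ gives the reverse inequality, proving \eqref{ex-minimizer-1}. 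The same potential estimate, via the HLS/Young inequality for the kernel $|x|^{2-N}$ applied to the compactly supported $U_R\in L^1\cap L^m$ and using $m>2-\frac2N>\frac{2N}{N+2}$, yields $V_R=\Gamma\ast U_R\in L^{\frac{m}{m-1}}_{loc}(\R^N)$.

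To obtain radial symmetry and to classify all minimizers I would symmetrize: for any global minimizer $U_R$, let $U_R^{\ast}$ be its symmetric decreasing rearrangement. Rearrangement preserves $\|U_R\|_1$ and $\|U_R\|_m$ and turns the support into a centered ball of the same Lebesgue measure, hence of radius $\tilde R_o\le R$, so $U_R^{\ast}\in\mathcal{Y}_{M,R}$ and the diffusion term is unchanged. Since $\Gamma(x)=c_N|x|^{2-N}$ is strictly symmetric decreasing, the Riesz rearrangement inequality gives
$$
\int_{\R^N}\int_{\R^N}\Gamma(x-y)U_R^{\ast}(x)U_R^{\ast}(y)\,dx\,dy\ \ge\ \int_{\R^N}\int_{\R^N}\Gamma(x-y)U_R(x)U_R(y)\,dx\,dy,
$$
so $E[U_R^{\ast}]\le E[U_R]=\mu_{M,R}$; as $U_R^{\ast}\in\mathcal{Y}_{M,R}$ forces $E[U_R^{\ast}]\ge\mu_{M,R}$, equality holds throughout and $\tilde U_R:=U_R^{\ast}$ is a radial minimizer (this proves existence of a radial minimizer as well). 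Because the diffusion terms coincide, the two interaction integrals above must be equal; the equality case of the Riesz rearrangement inequality for the strictly decreasing kernel $\Gamma$ then forces $U_R$ to be a translate $\tilde U_R(\cdot+y)$ of its rearrangement. The admissible translations are precisely those keeping the support inside $B_R$, i.e. $\tilde R_o+|y|\le R$, and conversely every such translate is a minimizer by the translation invariance of $E$.

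The main obstacle I anticipate is concentrated in the two nontrivial analytic inputs. The first is the compactness of the Newtonian potential operator on the bounded ball, which powers the weak continuity of the attractive term; this compactness is exactly what fails in the whole space and is the reason for restricting to $\mathcal{Y}_{M,R}$ in the first place. The second, and more delicate, is the rigidity in the equality case of the Riesz rearrangement inequality: it is what upgrades the statement ``a radial minimizer exists'' to ``every minimizer is a translate of a radial one'', and it relies on the sharp characterization of equality (Lieb, Burchard) rather than on the inequality alone.
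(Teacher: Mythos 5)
Your proposal is correct and follows essentially the same route as the paper: the direct method with the HLS-based coercivity bound \eqref{E-2}, compactness of the Newtonian potential on the ball to pass to the limit in the attraction term, and Lieb's equality case of the Riesz rearrangement inequality to obtain radiality and the classification of all minimizers up to admissible translations. The only organizational differences are that the paper symmetrizes the minimizing sequence (so the weak $L^m$ limit is already radial) and identifies the limit potential via Sobolev compactness plus Weyl's lemma, whereas you symmetrize an arbitrary minimizer afterwards and invoke compactness of the operator $U\mapsto(\Gamma\ast U)|_{B_R}$ directly; both are equivalent in substance.
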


\begin{proof}
By the definition of $\mu_{M,R}$, there exists a sequence
$\{U_n\}_{n\in\N} \subset \mathcal{Y}_{M,R}$ such that
\begin{eqnarray} \label{EU-convergence}
E[U_n] & \longrightarrow & \mu_{M,R}
\qquad {\rm as} \ n \to \infty.
\end{eqnarray}
We have by (\ref{E-2}) that
\begin{eqnarray*}
E[U_n]
& \ge &
 \frac{1}{m-1} \|U_n\|_m^m \ - \  \frac{C_{HLS} }{2(N-2) \omega_{N-1}}
     M^{2-\frac{(N-2)m}{N(m-1)}} \|U_n\|_m^{\frac{(N-2)m}{N(m-1)}}
\nonumber \\
& = &
\|U_n\|_m^{\frac{(N-2)m}{N(m-1)}}
\Big(
 \frac{1}{m-1} \|U_n\|_m^{m-\frac{(N-2)m}{N(m-1)}}
              \ - \  \frac{C_{HLS} }{2(N-2) \omega_{N-1}}
     M^{2-\frac{(N-2)m}{N(m-1)}}
\Big)
\end{eqnarray*}
for $U_n \in \mathcal{Y}_{M,R}$, where $C_{HLS}=C_{HLS}(N)$. Since
$\{E[U_n]\}_{n\in\N}$ is bounded, there exists $L>0$ such that
\begin{eqnarray}\label{bound-sequence}
\|U_n\|_m & \le & L \quad {\rm for \ all} \ n\in\N.
\end{eqnarray}
Let us denote by $f^\#$ the radially decreasing rearrangement of the function $f$ in $\R^N$.
Now, we make use of classical radially decreasing rearrangement inequalities in \cite[Lemma 2.1]{Lieb} to show that
\begin{equation}\label{dr}
\int_{\R^N}\int_{\R^N}U_n(x) \Gamma(x-y) U_n(y) \, dx\ dy \leq \int_{\R^N}\int_{\R^N}U_n^\#(x) \Gamma(x-y) U_n^\#(y) \, dx\ dy\, ,
\end{equation}
and thus $E[U_n]\geq E[U_n^\#]$ for all $n\in\N$. Therefore, we can assume without loss of generality that our minimizing sequence is composed of radial functions.

By virtue of (\ref{bound-sequence}), there exist a subsequence
$\{U_{n_j}\}_{j=1}^{\infty} \subset \mathcal{Y}_{M,R}$ such that
\begin{eqnarray}\label{weakconv}
U_{n_j}
&
\rightharpoonup
&
U_R
\hspace{6mm}
{\rm weakly}         \ {\rm in} \ L^m(\R^N).
\end{eqnarray}
In addition by weak convergence, it holds that its support lies in
$B_R$. We also observe that
\begin{eqnarray*}
M \ = \ \int_{\R^N} U_{n_j} \ dx = \int_{B_R} U_{n_j} \ dx
\longrightarrow \int_{B_R} U_R \ dx \ = \ \int_{\R^N} U_R \ dx \quad {\rm
as} \ j \to \infty,
\end{eqnarray*}
and thus, we prove that $U_R \in \mathcal{Y}_{M,R}$. Moreover, the lower
semi-continuity of norm yields that
\begin{eqnarray}\label{lowersemic}
\|U_R\|_m & \le & \liminf_{j \to \infty} \|U_{n_j}\|_m.
\end{eqnarray}
Let us denote $V_{n_j}=\Gamma \ast U_{n_j}$, i.e., $-\Delta
V_{n_j}=U_{n_j}$. Now, we shall prove the strong convergence of
$V_{n_j}$. We first treat the case of $2-\frac{2}{N} < m <
\frac{N}{2}$. Sobolev compactness theorem implies that there
exists a subsequence, still denoted by
$\{V_{n_j}\}_{j=1}^{\infty}$, such that
\begin{eqnarray*}
V_{n_j}
&
\longrightarrow
&
\tilde{V}_R
\hspace{6mm}
{\rm strongly}         \ {\rm in} \ L^q(B_R) \quad {\rm for} \ q<\frac{Nm}{N-2m}.
\end{eqnarray*}
For $N \ge 3$ we have that $\frac{m}{m-1} < \frac{Nm}{N-2m}$, it
follows from the above convergence that
\begin{eqnarray}\label{strongconv}
V_{n_j}
&
\longrightarrow
&
\tilde{V}_R
\hspace{6mm}
{\rm strongly}         \ {\rm in} \ L^{\frac{m}{m-1}}(B_R).
\end{eqnarray}
In case $m \ge \frac{N}{2}$, it holds that
\begin{eqnarray*}
V_{n_j}
&
\longrightarrow
&
\tilde{V}_R
\hspace{6mm}
{\rm strongly}         \ {\rm in} \ L^q(B_R)
\end{eqnarray*}
for all $1<q<\infty$, and hence (\ref{strongconv}) holds for all
$m > 2-\frac{2}{N}$. Combining (\ref{weakconv}) with
(\ref{strongconv}), we observe that
\begin{eqnarray}
\lefteqn{
\left|
\int_{B_R} U_{n_j}(x)V_{n_j}(x) \ dx - \int_{B_R} U_R(x)\tilde{V}_R(x) \ dx
\right|
} \nonumber \\
& \le &
\left|
\int_{B_R} U_{n_j}(x) (V_{n_j}(x) - \tilde{V}_R(x) ) \ dx
\right|
+
\left|
\int_{B_R} (U_{n_j}(x) - U_R(x))\tilde{V}_R(x) \ dx
\right|
\nonumber \\
& \le &
\|U_{n_j}\|_m \| V_{n_j} - \tilde{V}_R\|_{L^{\frac{m}{m-1}}(B_R)}
+
\left|
\int_{B_R} (U_{n_j}(x) - U_R(x))\tilde{V}_R(x) \ dx
\right|
\nonumber \\
& \le &
L \| V_{n_j} - \tilde{V}_R\|_{L^{\frac{m}{m-1}}(B_R)}
+
\left|
\int_{B_R} (U_{n_j}(x) - U_R(x))\tilde{V}_R(x) \ dx
\right|
\nonumber \\
& \longrightarrow & 0 \qquad {\rm as} \ j \to \infty.
\label{uvj-uvr}
\end{eqnarray}
We next show that $\tilde{V}_R=V_R$ is given by
\begin{eqnarray} \label{Vtilde-G}
V_R (x) & = & \int_{\R^N} \Gamma(x-y) U_R(y) \ dy.
\end{eqnarray}
First, it should be noted that $\tilde{V}_R \in L^{\frac{2N}{N-2}}(\R^N)$ by the HLS inequality \eqref{hl-2} applied to the sequence $V_{n_j}$ using \eqref{bound-sequence}. We
deduce from (\ref{weakconv}) and (\ref{strongconv}) that
\begin{eqnarray*}
-\int_{\R^N} \tilde{V}_R(x) \Delta \varphi(x) \ dx & = & \int_{\R^N} U_R(x)
\varphi(x) \ dx
\end{eqnarray*}
for all $\varphi \in C_0^{\infty}(\R^N).$
Then by the Weyl Lemma, it holds
$\tilde{V}_R \in W_{loc}^{2,m}(\R^N)$
with
\begin{eqnarray*}
-\Delta \tilde{V}_R(x) & = & U_R(x) \quad {\rm a.e.} \ x \in \R^N.
\end{eqnarray*}
Since $\tilde{V}_R \in L^{\frac{2N}{N-2}}(\R^N)$, we conclude that $\tilde{V}_R=V_R$, i.e., $\tilde V_R$ is the Newtonian potential of $U_R$ given in (\ref{Vtilde-G}). Observe that due to $m>2-\frac2N$ then
$V_R \in L_{loc}^{\frac{m}{m-1}}(\R^N)$. Finally, since the Newtonian potentials $V_n$ are radial functions for all $n\in\N$ due to the radial symmetry of $U_n$, then $V_R$ is radially symmetric being the strong $L^p$ limit of radial functions, and thus $U_R$ too due to the regularity above.

Finally, let us make use of the free energy convergence
(\ref{EU-convergence}) and $U_R \in \mathcal{Y}_{M,R}$ together with the weak
lower semicontinuity (\ref{lowersemic}) and the strong convergence
(\ref{uvj-uvr}) to derive that
\begin{eqnarray*}
\mu_{M,R}
& = &
\lim_{j \to \infty} E[U_{n_j}]
\ = \
\liminf_{j \to \infty} E[U_{n_j}]
\nonumber \\
& \ge & \frac{1}{m-1} \|U_R\|_{L^m(B_R)}^m - \frac12\int_{B_R} U_R(x) V_R(x) \ dx
\nonumber \\
& = & E[U_R] \ge
\inf_{U \in \mathcal{Y}_{M,R}} E[U] \ = \ \mu_{M,R},
\end{eqnarray*}
which yields the existence of a radial global minimizer (\ref{ex-minimizer-1}).

To finish the proof, we need to show that all global minimizers are obtained in terms of radial global minimizers. This is a consequence of the equality cases in \eqref{dr} also discussed in \cite[Lemma 2.1]{Lieb}. The inequality in \eqref{dr} applied to a function $f$ is strict unless there exists $y\in\R^N$ such that $f(x)=f^\#(x+y)$. Therefore, assume that $f$ is another global minimizer of $E$ in $\mathcal{Y}_{M,R}$, then
$E[f]= E[f^\#]$ by definition of global minimizer. Since the $L^m$ norms of $f$ and $f^\#$ are equal, then $f$ satisfies the equality in \eqref{dr}. Finally, \cite[Lemma 2.1]{Lieb} ensures us that there exists $y\in\R^N$ such that $f(x)=f^\#(x+y)$. Denote $\mbox{supp}(f^\#)=\bar{B}_{\tilde{R}_o}$, then $\tilde{R}_o+|y|\leq R$ since $\mbox{supp}(f)\subset\bar{B}_{R}$. It is now obvious that $f^\#\in \mathcal{Y}_{M,R}$ by standard radially decreasing rearrangement properties. Therefore, $f^\#$ is a radial global minimizer of $E$ in $\mathcal{Y}_{M,R}$. This gives the desired result.
\end{proof}

We now give an equation satisfied by global minimizers of $E$ in
$\mathcal{Y}_{M,R}$. The following Lemma is essentially obtained by
\cite[Lemma 10]{Strohmer} and \cite{CCV}. In fact, global minimizers are solutions of
an obstacle problem.

\begin{lemma} \label{lem:statinary-solution}
Let $N \ge 3$ and $m > 2-\frac{2}{N}$, $M,R>0$. If $U_R \in \mathcal{Y}_{M,R}$ is a global minimizer of $E$ in $\mathcal{Y}_{M,R}$, then there exists a constant $\hat{C}$ such that $U_R$ satisfies
\begin{eqnarray}\label{mi-characterization}
\frac{m}{m-1} U_R^{m-1}(x)
& = & \Big(
V_R(x) + \hat{C} \Big)_+
\qquad
a.e. \ x \in B_R,
\end{eqnarray}
where $V_R=\Gamma\ast U_R\in L_{loc}^{\frac{m}{m-1}}(\R^N)$. Equivalently, $U_R$ satisfies
$$
\left\{ \begin{array}{ccc}
\frac{m}{m-1} U_R^{m-1}(x) - V_R(x)  = \hat{C} & , & \mbox{ if } U_R(x)>0 \\[4mm]
\frac{m}{m-1} U_R^{m-1}(x) - V_R(x)  \geq \hat{C} & , & \mbox{otherwise}
\end{array}\right.\,.
$$
\end{lemma}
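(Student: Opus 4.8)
The plan is to derive \eqref{mi-characterization} as the Euler--Lagrange / complementarity conditions for the constrained minimization defining $\mu_{M,R}$, the two active constraints being the mass normalization $\|U\|_1=M$ and the sign condition $U\ge 0$. Throughout I would write
$$
g(x):=\frac{m}{m-1}U_R^{m-1}(x)-V_R(x),\qquad x\in B_R,
$$
so that the asserted identity is precisely $\frac{m}{m-1}U_R^{m-1}=(V_R+\hat C)_+$ with $\hat C$ the Lagrange multiplier of the mass constraint. Note that on $\{U_R=0\}$ one has $g=-V_R$, and this is what reconciles the two regimes of the obstacle formulation.

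First I would record the one-sided G\^ateaux derivative of $E$ along admissible directions. For bounded $\phi$ with compact support in $B_R$ and $\int_{B_R}\phi\,dx=0$, and $\varepsilon>0$ small enough that $U_R+\varepsilon\phi\in\mathcal Y_{M,R}$, I claim
$$
\frac{d}{d\varepsilon}\Big|_{\varepsilon=0^+}E[U_R+\varepsilon\phi]=\int_{B_R} g\,\phi\,dx .
$$
The quadratic interaction term is differentiable with derivative $-\int_{B_R}V_R\phi\,dx$, which is finite because $V_R\in L^{m/(m-1)}_{loc}(\R^N)$ by Lemma \ref{lemma:ex-minimizer} and $\phi\in L^m$. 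For the diffusion term I would split $B_R=\{U_R\ge\delta\}\cup\{U_R<\delta\}$: where $U_R$ is bounded below the difference quotient of $(U_R+\varepsilon\phi)^m$ is dominated, via the mean value theorem, by $m(U_R+|\phi|)^{m-1}|\phi|\in L^1$ (using $U_R\in L^m$ with compact support), so dominated convergence yields $\frac{m}{m-1}\int U_R^{m-1}\phi$; where mass is added to $\{U_R=0\}$ the local contribution is $\frac1{m-1}\int(\varepsilon\phi)^m$, whose right derivative at $0$ vanishes since $m>1$. This last point is the crux: creating mass on $\{U_R=0\}$ costs nothing at first order in the diffusion term, so only $-V_R$ enters there, consistently with $g=-V_R$ on that set.

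Next I would extract the two conditions. For the \emph{interior} equality, fix $\delta>0$ and take $\phi$ supported in $\{U_R\ge\delta\}$ with $\int\phi=0$; then $U_R\pm\varepsilon\phi\in\mathcal Y_{M,R}$ for small $\varepsilon$, so minimality forces the two-sided derivative to vanish, giving $\int g\,\phi=0$ for all such $\phi$. Hence $g$ equals a constant $\hat C_\delta$ a.e.\ on $\{U_R\ge\delta\}$, and by monotonicity in $\delta$ these agree, so $g\equiv\hat C$ a.e.\ on $\{U_R>0\}=\bigcup_{\delta>0}\{U_R\ge\delta\}$, a set of positive measure since $M>0$. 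For the \emph{boundary} inequality, given $S_0\subset\{U_R=0\}$ of positive finite measure and $S_+\subset\{U_R\ge\delta\}$, I would use the one-sided direction $\phi=\chi_{S_0}-c\,\chi_{S_+}$ with $c=|S_0|/|S_+|$ so that $\int\phi=0$; only $\varepsilon>0$ is admissible, so minimality gives $\int_{B_R}g\,\phi\ge0$, i.e.\ $\int_{S_0}g\ge c\,\hat C\,|S_+|=\hat C\,|S_0|$. As $S_0$ is arbitrary this yields $g\ge\hat C$ a.e.\ on $\{U_R=0\}$. Combining the two conditions is exactly the obstacle formulation, and rewriting $\frac{m}{m-1}U_R^{m-1}=\hat C+V_R$ on $\{U_R>0\}$ together with $V_R+\hat C\le0$ on $\{U_R=0\}$ gives $\frac{m}{m-1}U_R^{m-1}=(V_R+\hat C)_+$ a.e.\ in $B_R$.

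The main obstacle is the rigorous justification of the one-sided derivative in the degenerate range $2-\frac2N<m<2$, where $t\mapsto t^{m-1}$ is only H\"older and $U_R^{m-1}$ degenerates at the free boundary: one must dominate the difference quotient of the diffusion term uniformly in $\varepsilon$ on $\{U_R\ge\delta\}$, and simultaneously check that the newly created mass on $\{U_R=0\}$ enters only at order $\varepsilon^m$, which is exactly where $m>1$ is used. All finiteness of the interaction integrals is guaranteed by $V_R\in L^{m/(m-1)}_{loc}(\R^N)$ from Lemma \ref{lemma:ex-minimizer}.
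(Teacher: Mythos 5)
Your proof is correct, and it follows the same variational skeleton as the paper's: two-sided admissible variations yield the Euler--Lagrange equality on the positivity set, one-sided variations yield the inequality where $U_R$ vanishes, and the fact that $m>1$ makes mass creation on $\{U_R=0\}$ cost nothing at first order. The implementations differ in the choice of perturbations, and the difference is instructive. The paper takes variations weighted by the minimizer itself, $\varphi=\tfrac12\bigl(\phi-\tfrac1M\int\phi\, U_R\,dy\bigr)U_R$ with $\phi\in C_0(B_R)$, $|\phi|\le\tfrac12$: since $2|\varphi|\le U_R$, admissibility of $U_R+\tau\varphi$ is automatic for $|\tau|\le1$ and the difference quotient of the diffusion term is dominated by a multiple of $U_R^m$, so the degeneracy at the free boundary never enters; this yields at once $\bigl(F(U_R)-\hat C\bigr)U_R=0$ a.e.\ with the multiplier identified \emph{explicitly} as $\hat C=\tfrac1M\int\bigl(\tfrac{m}{m-1}U_R^{m-1}-V_R\bigr)U_R\,dx$, which is exactly the constant reappearing in Theorem \ref{thm:ch-statinary-solution}(iv). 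For the inequality, the paper uses one-sided variations $\tilde\varphi=\tilde\phi-\tfrac{U_R}{M}\int\tilde\phi$ with $\tilde\phi\ge0$, getting $F(U_R)\ge\hat C$ a.e.\ on all of $B_R$, but it cites \cite{Strohmer,Bian-Liu,CCV} for the differentiability of $\tau\mapsto E[U_R+\tau\tilde\varphi]$ at $\tau=0^+$ --- which is precisely the $\varepsilon^{m-1}$ point you prove by hand. Your route instead localizes to superlevel sets $\{U_R\ge\delta\}$ with indicator-type perturbations and then transfers mass from $\{U_R\ge\delta\}$ to $\{U_R=0\}$; this buys a self-contained proof of the one-sided differentiability (which the paper outsources), at the price of obtaining $\hat C$ only as an abstract constant --- though its value can be recovered a posteriori by multiplying the equality by $U_R$ and integrating over $\{U_R>0\}$. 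One minor point to tighten: your general derivative formula splits $B_R$ into $\{U_R\ge\delta\}$ and $\{U_R<\delta\}$ but then only treats $\{U_R=0\}$ in the second piece, leaving $\{0<U_R<\delta\}$ formally unaddressed; this is harmless, since the domination $m(U_R+|\phi|)^{m-1}|\phi|\in L^1$ that you invoke holds on all of $B_R$ (so no splitting is needed), and in both of your applications $\phi$ vanishes on $\{0<U_R<\delta\}$ anyway, but as written you should either drop the splitting or restrict the claim to the test functions you actually use.
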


\begin{proof}
Let $U_R \in \mathcal{Y}_{M,R}$ a global minimizer of $E$ in $\mathcal{Y}_{M,R}$.
We choose a compactly supported continuous function $\varphi \in L^1\cap L^m(B_R)$ such that
\begin{eqnarray}\label{phi-class}
\displaystyle \int_{\R^N} \varphi(x) \ dx = 0
\quad
{\rm and}
\quad 2|\varphi(x)| \le U_R(x) \quad {\rm for \ a.e. \ } x \in \R^N.
\end{eqnarray}
For such $\varphi$,
it holds that $U_R+\tau \varphi \in \mathcal{Y}_{M,R}$ for all $-1 \le \tau \le 1$.
Indeed, we easily see that
\begin{eqnarray*}
U_R \in L^1 \cap L^m(\R^N)
\quad
{\rm and}
\quad
\int_{\R^N} \Big( U_R(x) + \tau \varphi(x)   \Big) \ dx
            \ = \ \int_{\R^N} U_R(x) \ dx \ = \ M.
\end{eqnarray*}
In addition, \eqref{phi-class} implies that $\varphi(x)=0$ for a.e. $x \in \R^N \backslash B_R$ and
\begin{eqnarray*}
U_R(x) + \tau \varphi(x)
& \ge & U_R(x) - |\varphi(x)|
\ \ge \ U_R(x) - \frac{1}{2} U_R(x) \ \ge \ 0 \quad {\rm for \ a.e.} \ x \in \R^N.
\end{eqnarray*}

Since $U_R$ is a global minimizer of $E$ with support in the compact ball $B_R$ with good integrability properties, then we have that $E(U_R+ \tau \varphi)$ is a differentiable function with respect to $\tau$, see for instance \cite{Strohmer,Bian-Liu,CCV}, and
\begin{equation}\label{E-derivative}
\frac{d}{d\tau}
E[U_R+ \tau \varphi]|_{\tau=0}
=  \int_{\R^N} \left[ \frac{m}{m-1}U_R^{m-1}(x) - (\Gamma\ast U_R)(x) \right] \varphi(x) \ dx
= 0
\end{equation}
for all $\varphi$ with the property (\ref{phi-class}).
Now, let us take $\varphi$ as
\begin{eqnarray*}
\varphi(x)
& = &
\frac{1}{2} \Big( \phi(x) - \frac{1}{M} \int_{\R^N} \phi(y) U_R(y) \ dy  \Big) U_R(x),
\quad x \in \R^N,
\end{eqnarray*}
where $\phi \in C_0(B_R)$ is a function with the property that
$|\phi(x)| \le \frac{1}{2}$ for all $x \in \R^N$.
It is easily seen that $\varphi$ satisfies (\ref{phi-class}).
Indeed, since $U_R$ has mass $M$, then $\varphi$ has zero average and
\begin{eqnarray*}
2|\varphi(x)|
& \le &
\Big(
|\phi(x)| + \frac{1}{M} \max_{y \in \R^N} |\phi(y)| \int_{\R^N} U_R(y) \ dy
\Big) U_R(x)
\ \le \ U_R(x)
\end{eqnarray*}
for all $x \in \R^N$.
Hence, we deduce by (\ref{E-derivative}) that
\begin{equation}\label{h0-expression}
\int_{\R^N} \left[ \frac{m}{m-1}U_R^{m-1}(x) - (\Gamma\ast U_R)(x) \right] \Big( \phi(x) - \frac{1}{M} \int_{\R^N} \phi(y) U_R(y) \ dy  \Big) U_R(x)\ dx
= 0\,.
\end{equation}
Therefore, by denoting $F(U_R)$ by
\begin{eqnarray}\label{def-FU0}
F(U_R) & = &
\frac{m}{m-1} U_R^{m-1} - V_R
\end{eqnarray}
with $V_R=\Gamma\ast U_R$,
(\ref{h0-expression}) is equivalent to
\begin{eqnarray*}
\int_{\R^N}
\Big( F(U_R(x)) - \hat{C}   \Big) U_R(x) \phi(x) \ dx
& = &
0
\end{eqnarray*}
for all $\phi \in C_0(B_R)$, where $\hat{C}$ is given by
\begin{eqnarray*}
\hat{C} & := & \frac{1}{M}\int_{\R^N} F(U_R(x)) U_R(x) \ dx.
\end{eqnarray*}
This implies by (\ref{def-FU0}) that
\begin{eqnarray}\label{F-omega-1}
\frac{m}{m-1} U_R^{m-1}(x) & = &  V_R(x) \ + \ \hat{C}
\qquad  {\rm for \ a.e.} \  x \in \{z \in \R^N; U_R(z) > 0\}.
\end{eqnarray}

To prove (\ref{mi-characterization}), it remains to treat the points where
$\Omega := \{x \in B_R; U_R(x) =  0\}$.
Now, we introduce $\tilde{\varphi}$ by
\begin{eqnarray*}
\tilde{\varphi}(x)
& = & \tilde{\phi}(x) - \frac{U_R(x)}{M} \int_{\R^N} \tilde{\phi}(y) \ dy,
\end{eqnarray*}
where $\tilde{\phi} \in C_0(\R^N)$ is a function with the properties
that $\tilde{\phi}(x)\ge 0$ for $x \in \R^N$ and ${\rm supp} (\tilde{\phi}) \subset B_R$.
Then we find
\begin{eqnarray*}
U_R+\tau \tilde{\varphi} \in \mathcal{Y}_{M,R} \qquad {\rm for \ all} \ 0 \le \tau \le \tau_0:=\frac{M}{2\int_{\R^N} \tilde{\phi}(x) \ dx}.
\end{eqnarray*}
Indeed, it holds that since $U_R, \tilde{\varphi} \in L^1 \cap L^m(\R^N)$ with support in $B_R$, then $\tilde{\varphi}$ has zero average and
\begin{eqnarray*}
U_R(x) + \tau \tilde{\varphi}(x)
& \ge & U_R(x) - \frac{M}{2\int_{\R^N} \tilde{\phi}(x) \ dx}
\frac{U_R(x)}{M} \int_{\R^N} \tilde{\phi}(y) \ dy
\nonumber \\
& = & \frac{U_R(x)}{2}
\ \ge \ 0, \qquad 0 \le \tau \le \tau_0 \quad
{\rm for \ a.e.} \ x \in \R^N,
\end{eqnarray*}
using that $\tilde{\phi}(x) \ge 0$ and $\tau\leq \tau_0$.

Again, since $U_R$ is a global minimizer of $E$ with support in the compact ball $B_R$ with good integrability properties, then we have that $E(U_R+ \tau \tilde \varphi)$ is a differentiable function with respect to $\tau$, see for instance \cite{Strohmer,Bian-Liu,CCV}, and then
\begin{equation*}
\frac{d}{d\tau}
E[U_R+ \tau \tilde{\varphi}]|_{\tau=0^+} =
\int_{\R^N} \Big( F(U_R(x)) - \hat{C}   \Big) \tilde{\phi}(x) \ dx \geq 0\,,
\end{equation*}
for all $\tilde \phi$. This implies that
$F(U_R(x)) - \hat{C} \geq 0$ for a.e. $x \in B_R$,
which gives the desired condition
\begin{equation}\label{F-omega-2}
0=\frac{m}{m-1} U_R^{m-1}(x)
\geq
V_R(x) + \hat{C}
\qquad {\rm for \ a.e.} \ x \in \Omega.
\end{equation}
Combining (\ref{F-omega-1}) with (\ref{F-omega-2}),
we can express $U_R$ by
\begin{eqnarray*}
\frac{m}{m-1} U_R^{m-1}(x)
& = & \Big(V_R(x) + \hat{C} \Big)_+ \quad {\rm for \ a.e.} \ x \in B_R\,,
\end{eqnarray*}
that finishes the nonlinear equation satisfied by the minimizer in \eqref{mi-characterization}.
\end{proof}

We can now use regularity theory to improve the properties of the global minimizers.

\begin{lemma} \label{lem:statinary-solution2}
Let $N \ge 3$ and $m > 2-\frac{2}{N}$, $M,R>0$. If $U_R \in \mathcal{Y}_{M,R}$ is a radially decreasing global minimizer of $E$ in $\mathcal{Y}_{M,R}$ with support $\bar{B}_{R_o}$, $0<R_o\leq R$, then
\begin{eqnarray} \label{Linfty-U0}
U_R \in L^1 \cap L^{\infty}(\R^N),
\end{eqnarray}
and thus $V_R\in L^q(\R^N)$ with $\frac{N}{N-2}<q<\infty$. Moreover,
$U_R \in C(B_R)\cap C^1(B_{R_o})\cap W^{2,p}(B_{R_o})$, $U_R^{m-1} \in W^{1,\infty}(B_R)\cap W^{2,p}(B_{R_o})$ for all $1<p<\infty$, $V_R\in C^1(\R^N)$, and
\eqref{mi-characterization} holds for all $x \in B_R$.
\end{lemma}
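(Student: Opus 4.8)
The plan is to run a bootstrap whose engine is the obstacle-problem identity \eqref{mi-characterization}. From it one has the pointwise bound $\frac{m}{m-1}U_R^{m-1}=(V_R+\hat C)_+\le |V_R|+|\hat C|$ on $B_R$, which transfers integrability of the Newtonian potential $V_R=\Gamma\ast U_R$ back to $U_R$, while the Hardy--Littlewood--Sobolev / Riesz potential estimates (cf.\ Lemma \ref{lem:HLS}) transfer integrability of $U_R$ to $V_R$. Starting from $U_R\in L^1\cap L^m(\R^N)$ with support in $\bar B_{R_o}$, I would iterate as follows: if $U_R\in L^p$ with $1<p<N/2$, then $V_R\in L^q$ with $q=\frac{Np}{N-2p}$, and since $U_R$ is compactly supported the bound above yields $U_R\in L^{(m-1)q}$. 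This is the iteration $p\mapsto (m-1)\frac{Np}{N-2p}$.

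First I would check that this map is increasing on $(0,N/2)$ and that its unique fixed point is $\frac{N(2-m)}{2}$. The hypothesis $m>2-\frac2N$ forces $\frac{N(2-m)}{2}<1<m$, so the initial exponent $p_0=m$ lies strictly above the fixed point; hence the iterates strictly increase and, since they cannot converge to a fixed point lying below $p_0$, they must leave $(0,N/2)$ after finitely many steps (passing a borderline value at $N/2$ is harmless, since compact support lets us drop to any slightly smaller exponent). Once $U_R\in L^p$ with $p>N/2$, the kernel satisfies $\Gamma\in L^{p'}_{loc}$ because $p'<\frac{N}{N-2}$, and compact support gives $V_R\in L^\infty$; then \eqref{mi-characterization} forces $U_R^{m-1}\in L^\infty$ and therefore $U_R\in L^1\cap L^\infty(\R^N)$. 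I expect this bootstrap to be the main obstacle; everything else is a consequence.

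With $U_R\in L^1\cap L^\infty$ compactly supported, $U_R\in L^p$ for every $p$, and the Newtonian potential behaves like $V_R(x)=\frac{M}{(N-2)\omega_{N-1}}|x|^{2-N}(1+o(1))$ at infinity while being locally bounded; since $|x|^{2-N}\in L^q$ at infinity exactly when $q(N-2)>N$, this gives $V_R\in L^q(\R^N)$ for all $\frac{N}{N-2}<q<\infty$. Next, from $-\Delta V_R=U_R\in L^\infty$ with compact support, Calder\'on--Zygmund estimates yield $V_R\in W^{2,p}_{loc}(\R^N)$ for all $p<\infty$, hence $V_R\in C^{1,\alpha}_{loc}$; as $V_R$ is harmonic, thus smooth, outside $\bar B_{R_o}$, one concludes $V_R\in C^1(\R^N)$. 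Feeding this continuity into \eqref{mi-characterization} makes the right-hand side $(V_R+\hat C)_+$ continuous, so $U_R^{m-1}$ and hence $U_R$ are continuous and vanish outside $\bar B_{R_o}$, giving $U_R\in C(B_R)$; moreover, both sides of \eqref{mi-characterization} being continuous upgrades the a.e.\ identity to every $x\in B_R$.

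Finally, inside the support $U_R>0$ implies $V_R+\hat C>0$, so \eqref{mi-characterization} becomes $\frac{m}{m-1}U_R^{m-1}=V_R+\hat C$ on $B_{R_o}$; together with $-\Delta V_R=U_R$ this is exactly the Lane--Emden equation $-\Delta U_R^{m-1}=\frac{m-1}{m}U_R$ of Section \ref{section:Lane-Emden}, now with bounded right-hand side, so elliptic regularity gives $U_R^{m-1}\in W^{2,p}(B_{R_o})$ for all $p<\infty$. Globally $U_R^{m-1}=\frac{m-1}{m}(V_R+\hat C)_+$ with $V_R\in C^1$, so its gradient $\frac{m-1}{m}\nabla V_R\,\chi_{\{V_R+\hat C>0\}}$ is bounded and $U_R^{m-1}\in W^{1,\infty}(B_R)$. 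For $U_R$ itself, on $B_{R_o}$ it equals a positive $C^1$ function raised to the power $\frac{1}{m-1}$, so $U_R\in C^1(B_{R_o})$, and $W^{2,p}$ regularity on compact subsets of $B_{R_o}$ follows from $V_R\in W^{2,p}_{loc}$ together with the positivity of $V_R+\hat C$ there. The only point requiring care beyond the bootstrap is the behaviour of these compositions near $\partial B_{R_o}$, where $V_R+\hat C$ vanishes; there I would exploit the linear vanishing of $V_R+\hat C$ (equivalently the finiteness of $|\psi'|$ at the edge established in Section \ref{section:Lane-Emden}) to control the powers $(V_R+\hat C)^{1/(m-1)}$.
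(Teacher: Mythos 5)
Your proposal is correct and takes essentially the same approach as the paper: the identical bootstrap iteration $p\mapsto (m-1)\frac{Np}{N-2p}$ generated by \eqref{mi-characterization} together with HLS/Riesz potential estimates, followed by Calder\'on--Zygmund regularity for $V_R$ and the identity $\frac{m}{m-1}U_R^{m-1}=V_R+\hat{C}$ on $B_{R_o}$. The differences are cosmetic --- the paper shows the bootstrap escapes $(0,N/2)$ by solving the recursion $m_{i+1}=\frac{Nm_i(m-1)}{N-2m_i}$ in closed form for $1/m_i$ instead of your monotone fixed-point argument, and gets $V_R\in L^\infty$ via $W^{2,p}\hookrightarrow L^\infty$ rather than H\"older against $\Gamma$ --- and, just like your attempt, the paper's proof only establishes the up-to-the-boundary $W^{2,p}(B_{R_o})$ regularity for $U_R^{m-1}$, leaving the corresponding claim for $U_R$ itself (your flagged boundary issue) unaddressed.
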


\begin{proof}
By (\ref{mi-characterization}), it suffices to show that $V_R \in L_{loc}^{\infty}(\R^N)$ to infer (\ref{Linfty-U0}). With this aim, we will prove that $U_R \in L^p(B_R)$ for certain $p>\frac{N}{2}$. This yields that $V_R\in W^{2,p}(B_R) \subset L^{\infty}(B_R)$ for $p>\frac{N}{2}$ by Weyl's Lemma and Morrey's theorem. We distinguish several cases:
\vspace{2mm} \\
(i) \ {\bf $m>\frac{N}{2}$}: Nothing to prove since $U_R\in L^m (B_R)$.
\vspace{2mm} \\
(ii) {\bf $m=\frac{N}{2}$}: Due to (\ref{mi-characterization}), H\"{o}lder's inequality and the Hardy-Littlewood-Sobolev inequality imply that
\begin{eqnarray*}
\| U_R\|_{L^N(B_R)}^{m-1}
& = &
\| U_R^{m-1} \|_{L^{\frac{N}{m-1}}(B_R)}
\ \le \
\| V_R \|_{L^{\frac{N}{m-1}}(B_R)}
\ + \ \hat{C}|B_R|^{\frac{m-1}{N}}
\\
& \le & C \|U_R\|_{L^{\frac{N}{m+1}}(B_R)} \ + \ \hat{C}|B_R|^{\frac{m-1}{N}}
\\
& \le & C \|U_R\|_{\frac{N}{2}} |B_R|^{\frac{m-1}{N}} \ + \ \hat{C}|B_R|^{\frac{m-1}{N}}
\label{m-N/2} \\
& = & \bigl( C \|U_R\|_{m} + \hat{C} \bigr) |B_R|^{\frac{m-1}{N}}
\ < \ \infty
\end{eqnarray*}
with $C=C(N)$, which yields $U_R \in L^N(B_R)$.
\vspace{2mm} \\
(iii) {\bf $2-\frac{2}{N} < m < \frac{N}{2}$}: We proceed by a bootstrap argument. Noting that
\begin{eqnarray*}
\frac{Nm(m-1)}{N-2m} \ > \ m,
\end{eqnarray*}
we have by (\ref{mi-characterization}) and the Hardy-Littlewood-Sobolev inequality that
\begin{eqnarray}
\| U_R \|_{L^{\frac{Nm(m-1)}{N-2m}}(B_R)}^{m-1}
& = &
\| U_R^{m-1} \|_{L^{\frac{Nm}{N-2m}}(B_R)}
\ \le \
\| V_R \|_{L^{\frac{Nm}{N-2m}}(B_R)}
\ + \ \hat{C}|B_R|^{\frac{N-2m}{Nm}}
\nonumber \\
& \le & C \|U_R\|_m \ + \ \hat{C}|B_R|^{\frac{N-2m}{Nm}} \ < \ \infty,
\label{Lemma5.3-ite}
\end{eqnarray}
where $C=C(N,m)$. Let us denote $m_1=\frac{Nm(m-1)}{N-2m}$.
We note that $m_1>1$ due to $m > 2-\frac{2}{N}$.
If $m_1>\frac{N}{2}$, {\it i.e.,} that $m^2>\frac{N}{2}$,
then we obtain the desired result.
In the case of $m_1=\frac{N}{2}$, we have by (\ref{Lemma5.3-ite}) that
$\|U_R\|_{L^{\frac{N}{2}}(B_R)} =\|U_R\|_{L^{m_1}(B_R)} \ < \ \infty$,
which together with case (ii) yields $\|U_R\|_{L^N(B_R)}<\infty$, and thus
we obtain the desired result.

In the case of $1<m_1<\frac{N}{2}$,
we obtain that
\begin{eqnarray*}
\| U_R \|_{L^{\frac{Nm_1(m-1)}{N-2m_1}}(B_R)}^{m-1}
& = &
\| U_R^{m-1} \|_{L^{\frac{Nm_1}{N-2m_1}}(B_R)}
\ \le \
\| V_R \|_{L^{\frac{Nm_1}{N-2m_1}}(B_R)}
\ + \ \hat{C}|B_R|^{\frac{N-2m_1}{Nm_1}}
\nonumber \\
& \le & C \|U_R\|_{m_1} \ + \ \hat{C}|B_R|^{\frac{N-2m_1}{Nm_1}} \ < \ \infty,
\end{eqnarray*}
where $C=C(N,m)$.

Now we proceed by induction defining a sequence $\{m_i\}_{i=0}^{\infty}$ by
\begin{eqnarray} \label{def-mi}
m_0 = m, \quad m_{i+1} & = & \frac{Nm_i(m-1)}{N-2m_i},
\quad i\in\mathbb{N} \cup \{0\}.
\end{eqnarray}
The previous procedure shows that $U_R \in L^{m_i}(B_R)$ for all $i$.
We shall show that there exists $i_* \in \mathbb{N}$ such that $m_{i_*+1} \ge \frac{N}{2}$
and $m_i < \frac{N}{2}$ for all $0\leq i \leq i_*$. This finishes the proof for this last case, since if $m_{i_*+1} = \frac{N}{2}$, we get that $U_R \in L^N(B_R)$ by the argument in case (ii), otherwise we have $m_{i_*+1} > \frac{N}{2}$.

Assume the contrary, {\it i.e.,} assume that $m_i<\frac{N}{2}$ for all $i\in\mathbb{N}$.
Then it holds by (\ref{def-mi}) that
\begin{eqnarray}\label{mi-positive}
m_i>0  \qquad {\rm for \ all} \ i\in\mathbb{N}.
\end{eqnarray}
On the other hand, the recursive formula (\ref{def-mi}) leads to
\begin{eqnarray*}
\frac{1}{m_i} & = &
\left\{
\begin{array}{lll}
\displaystyle A + B \frac{1}{(m-1)^i}
& {\rm for} \ 2-\frac{2}{N} < m <2, m>2, \\[4mm]
\displaystyle \frac{1}{2}  - \frac{2i}{N}
& {\rm for} \ m=2,
\end{array}
\right.\,
\end{eqnarray*}
for all $i\in\mathbb{N}$ with
$$
A=-\frac{2}{(m-2)N} \qquad \mbox{and} \qquad B=\frac1m+\frac{2}{(m-2)N}\,.
$$
Note that for $m>2$, $A$ is negative while the ${(m-1)^{-i}}\to 0$ as $i\to \infty$. Therefore, $m_i$ will become negative for large enough $i$ contradicting (\ref{mi-positive}).
Similarly, if $2-\frac{2}{N} < m <2$, then $A>0$, $B<0$, and ${(m-1)^{-i}}\to\infty$. Hence, we find that $m_i$ will be negative for large enough $i$ leading again to a contradiction with (\ref{mi-positive}). The case $m=2$ is obvious by the recursive formula above. This completes the proof of the first part of Lemma \ref{lem:statinary-solution2} since the regularity of $V_R$ is a direct consequence of the Hardy-Littlewood-Sobolev inequality.

Now, since $U_R \in L^1\cap L^\infty(\R^N)$, we deduce that $V_R \in L^{p}(\R^N)\cap W_{loc}^{2,p}(\R^N)$ for all $\frac{N}{N-2}<p<\infty$. In particular, it holds that $V_R \in C^1(\R^N)$.
By (\ref{mi-characterization}), we have $U_R \in C(B_R)$ and that (\ref{mi-characterization}) holds everywhere in $B_R$. Moreover,
$\frac{m}{m-1} U_R^{m-1}(x)= V_R(x) + \hat{C}$ for all $x \in B_{R_o}$ leading to $U_R^{m-1} \in W^{1,\infty}(B_R)\cap W^{2,p}(B_{R_o})$ for  all $1<p<\infty$. Observe also that $W^{2,p}(B_{R_o})\subset C^1(B_{R_o})$ for $p>N$, then $U_R\in C^1(B_{R_o})$.
This finishes the proof of the Lemma.
\end{proof}

\noindent{\bf Remark.} \
Let us point out that $R_o<R$ or $R_o=R$ depending on the mass $M$. We also observe that all minimizers have the regularity stated in the previous Lemma in their supports taking into account that they are translations of radial global minimizers whose support is inside $B_R$.

\begin{cor}
\label{cor:stationary-sol-property}
Let $N \ge 3$ and $m > 2-\frac{2}{N}$, $M,R>0$.
Suppose that $U_R \in \mathcal{Y}_{M,R}$ is a radially decreasing global minimizer of $E$ in $\mathcal{Y}_{M,R}$ with support in $\bar{B}_{R_0}$, $R_o\leq R$. Then $U_R(x)=\phi(|x|)$ where $\phi\in C^1(I_o) \cap W^{2,p}(I_o) \cap C^0([0,R])$ for all $1<p<\infty$, with $I_o=(0,R_o)$, and $\phi^{m-1}$ is a solution of the  Lane-Emden equation \eqref{le} in Lemma {\rm\ref{lemma:local-existence}} for some $\alpha>0$ satisfying the properties \eqref{monotone-dec-phi}, \eqref{asymp-phi}, and \eqref{cmp-sp}. Moreover,  $R_o=\min(R_*(\alpha),R)$ with $R_*(\alpha)$ defined in Corollary {\rm\ref{cor:M-alpha}}.
\end{cor}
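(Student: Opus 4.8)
The plan is to identify the radial profile of the minimizer with a Lane--Emden solution by differentiating the obstacle characterization of Lemma~\ref{lem:statinary-solution2} and then invoking the uniqueness from Lemma~\ref{lemma:local-existence}. Since $U_R$ is radially decreasing we write $U_R(x)=\phi(|x|)$ with $\phi$ non-increasing, and the regularity $\phi\in C^1(I_o)\cap W^{2,p}(I_o)\cap C^0([0,R])$ is inherited directly from Lemma~\ref{lem:statinary-solution2}, translating $U_R\in C(B_R)\cap C^1(B_{R_o})\cap W^{2,p}(B_{R_o})$ into radial coordinates. Set $\psi:=\phi^{m-1}=U_R^{m-1}$, regarded as a radial function of $r=|x|$.

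First I would derive the ODE. On the support $B_{R_o}$, where $U_R>0$, Lemma~\ref{lem:statinary-solution2} gives the pointwise identity $\frac{m}{m-1}\psi = V_R+\hat C$. Since $\psi\in W^{2,p}(B_{R_o})$ and $V_R\in W^{2,p}_{loc}(\R^N)$ with $-\Delta V_R=U_R$, applying $-\Delta$ to both sides yields
\begin{equation*}
-\frac{m}{m-1}\Delta\psi = -\Delta V_R = U_R = \psi^{\frac{1}{m-1}}\qquad\text{a.e. in }B_{R_o},
\end{equation*}
that is, $\Delta\psi=-\frac{m-1}{m}\psi^{1/(m-1)}$. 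In radial coordinates this reads $\psi''+\frac{N-1}{r}\psi'=-\frac{m-1}{m}\psi^{1/(m-1)}$ on $(0,R_o)$; as the right-hand side is continuous and positive on compact subsets of $(0,R_o)$, a bootstrap promotes $\psi$ to $C^2((0,R_o))$ and \eqref{le} holds classically there. For the initial data, set $\alpha:=\psi(0)=U_R(0)^{m-1}$, which is positive since a nonnegative radially decreasing density of mass $M>0$ cannot vanish at the center; because $V_R\in C^1(\R^N)$ the function $\psi$ is a radial $C^1$ function near the origin, hence $\psi'(0)=0$.

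With \eqref{le} and the initial conditions $\psi(0)=\alpha$, $\psi'(0)=0$ in hand, the uniqueness in Lemma~\ref{lemma:local-existence} forces $\psi$ to coincide on $(0,\min(R_o,R_*(\alpha)))$ with the unique Lane--Emden solution of initial height $\alpha$, so that $\psi$ inherits \eqref{monotone-dec-phi}, \eqref{asymp-phi} and \eqref{cmp-sp}. It remains to match the radii. If $R_o<R$, then $\bar B_{R_o}$ is the full support and continuity forces $\psi(R_o)=\phi(R_o)^{m-1}=0$; since the Lane--Emden solution is strictly positive on $(0,R_*(\alpha))$ and vanishes only at $R_*(\alpha)$, we conclude $R_o=R_*(\alpha)\le R$. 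If instead $R_o=R$, then $\psi>0$ on all of $(0,R)$, which is compatible with the positivity interval of the Lane--Emden solution only if $R\le R_*(\alpha)$. In both cases $R_o=\min(R_*(\alpha),R)$.

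The main obstacle I anticipate is this final identification of the radii, and in particular handling cleanly the boundary-touching alternative $R_o=R$, where the profile need not vanish at $\partial B_R$: here one cannot read off a zero of $\psi$ and must instead argue through the positivity interval of the Lane--Emden solution together with its uniqueness. The ODE derivation and the condition $\psi'(0)=0$ are comparatively routine once the $W^{2,p}$ regularity of Lemma~\ref{lem:statinary-solution2} and the $C^1$ regularity of $V_R$ are taken as given.
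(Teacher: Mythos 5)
Your proposal is correct and takes essentially the same route as the paper: translate the regularity of Lemma \ref{lem:statinary-solution2} into radial coordinates, take the Laplacian of the obstacle-problem identity \eqref{mi-characterization} on the support to obtain the Lane-Emden ODE, bootstrap to $C^2$, read off $\psi(0)=\alpha>0$ and $\psi'(0^+)=0$, and invoke the uniqueness of Lemma \ref{lemma:local-existence}. Your explicit two-case matching of the radii to get $R_o=\min(R_*(\alpha),R)$ is a correct elaboration of a step the paper compresses into ``the properties of the solutions of the Lane-Emden equation in Lemma \ref{lemma:local-existence} imply the rest of the results.''
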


\begin{proof}
Since $U_R$ is spherically symmetric and with the regularity stated in Lemma \ref{lem:statinary-solution2}, i.e., $U_R \in C(B_R)\cap C^1(B_{R_o})\cap W^{2,p}(B_{R_o})$. Then there exists a function $\phi\in C^1(I_o) \cap W^{2,p}(I_o) \cap C^0([0,R])$ for all $1<p<\infty$, such that $U_R(x)=\phi(|x|)$. Moreover $\phi'(0^+)=0$ and $\phi^{m-1}(0)=\alpha$ where $\alpha^{\frac1{m-1}}$ is the maximum value of $U_R$.
Moreover, since $U_R^{m-1} \in W^{2,p}(B_{R_o})$ for all $1<p<\infty$, then we can take the Laplacian in $B_{R_o}$ in the identity \eqref{mi-characterization} to conclude
$$
\frac{m}{m-1}\Delta U_R^{m-1} = U_R
$$
a.e. in $B_{R_o}$. Therefore, $\psi=\phi^{m-1}$ satisfies
\begin{eqnarray}\label{LME}
\quad \psi^{\prime \prime}(r) + \frac{N-1}{r} \psi^{\prime}(r)
 =
-\frac{m-1}{m} \psi^{\frac{1}{m-1}}(r)\,, \mbox{ a.e. in } \R^N \,,
\end{eqnarray}
for all $0<r<R_o$. Thus, $\psi\in C^2(I_o)$ since the right-hand side of \eqref{LME} is continuous and $\psi$ is a solution of the Lane-Emden equation \eqref{le} for some $\alpha>0$ as stated. The properties of the solutions of the Lane-Emden equation in Lemma \ref{lemma:local-existence} imply the rest of the results.
\end{proof}

Finally, we can characterize all global minimizers in $\mathcal{Y}_{M,R}$ putting together Corollary
\ref{cor:stationary-sol-property} and Lemma \ref{lemma:ex-minimizer}.

\begin{cor}
\label{cor:stationary-sol-property2}
Let $N \ge 3$ and $m > 2-\frac{2}{N}$, $M,R>0$. Given $\tilde{U}_R \in \mathcal{Y}_{M,R}$ a global minimizer of $E$ in $\mathcal{Y}_{M,R}$. Then there exists $U_R(x)=\phi(|x|)$, $\phi$ with support in $I_o=(0,R_o)$, and with the properties in Corollary {\rm\ref{cor:stationary-sol-property}}, and there exists $y\in \R^N$ with $R_o+|y|\leq R$ such that $\tilde{U}_R(x)=\phi(|x+y|)$.
\end{cor}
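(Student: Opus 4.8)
The plan is to combine the two preceding results directly, since the substance of the argument has already been carried out in each of them. First I would invoke Lemma \ref{lemma:ex-minimizer}, whose proof—through the equality case of the rearrangement inequality \eqref{dr} from \cite[Lemma 2.1]{Lieb}—shows that any global minimizer $\tilde{U}_R$ of $E$ in $\mathcal{Y}_{M,R}$ must coincide with a translate of its own symmetric decreasing rearrangement. Concretely, there exists $y\in\R^N$ such that $\tilde{U}_R(x)=U_R(x+y)$ with $U_R:=\tilde{U}_R^\#$, and since $E[\tilde{U}_R]=E[U_R]$ by the equality case, $U_R$ is itself a radial, indeed radially decreasing, global minimizer in $\mathcal{Y}_{M,R}$. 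Writing its support as $\bar{B}_{R_o}$ with $0<R_o\le R$, the inclusion $\mathrm{supp}(\tilde{U}_R)\subset\bar{B}_R$ forces the constraint $R_o+|y|\le R$.

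Second, I would apply Corollary \ref{cor:stationary-sol-property} to $U_R$. Because $U_R$ is a radially decreasing global minimizer with support $\bar{B}_{R_o}$, that corollary furnishes a profile $\phi\in C^1(I_o)\cap W^{2,p}(I_o)\cap C^0([0,R])$, $I_o=(0,R_o)$, with $U_R(x)=\phi(|x|)$ and $\psi:=\phi^{m-1}$ solving the Lane-Emden equation \eqref{le} for some $\alpha>0$ while satisfying \eqref{monotone-dec-phi}, \eqref{asymp-phi}, and \eqref{cmp-sp}. Substituting this back into $\tilde{U}_R(x)=U_R(x+y)$ yields $\tilde{U}_R(x)=\phi(|x+y|)$, which is exactly the asserted representation together with the radius bound $R_o+|y|\le R$.

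The argument presents no genuine analytical difficulty, as the heavy lifting lives in the two cited results. The only point requiring mild care is confirming that the radial minimizer produced by Lemma \ref{lemma:ex-minimizer} is radially \emph{decreasing}, a hypothesis needed to invoke Corollary \ref{cor:stationary-sol-property}; this is immediate since it arises as the symmetric decreasing rearrangement $\tilde{U}_R^\#$, which by construction is radial and nonincreasing in $|x|$. I would also keep the bookkeeping on the support consistent, ensuring that the same $R_o$ serves both as the support radius governing $R_o+|y|\le R$ and as the right endpoint of the interval $I_o$ on which $\psi=\phi^{m-1}$ solves \eqref{le}.
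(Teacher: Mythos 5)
Your proposal is correct and takes essentially the same route as the paper: the paper proves this corollary precisely by ``putting together'' Lemma \ref{lemma:ex-minimizer} (whose equality-case argument via \cite[Lemma 2.1]{Lieb} gives $\tilde{U}_R(x)=\tilde{U}_R^\#(x+y)$ with $R_o+|y|\le R$) and Corollary \ref{cor:stationary-sol-property} applied to the radially decreasing rearrangement. Your extra care about the rearrangement being radially non-increasing and about the support bookkeeping simply makes explicit what the paper leaves implicit.
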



\subsection{Global minimizers in $\mathcal{Y}_{M}$ and Stationary solutions: Proof of Theorem \ref{thm:ch-statinary-solution}}
\label{section:reg-stationary-point}

We first observe that using Lemma \ref{lem:statinary-solution2} it is not difficult to check that the radial global minimizer found in Corollary \ref{cor:stationary-sol-property2} is a steady state of (KS) in the sense of Definition \ref{stationarystates} if its support lies inside $B_R$. The questions now are if we can show that this always happens and if we can show that all stationary states of (KS) are given by the global minimizer except translations. With this aim, we will make use of a powerful recent result concerning radial symmetry of stationary solutions in the sense of Definition \ref{stationarystates}. One of the main theorems in \cite{CHVY} applied to our particular problem implies the following:

\begin{thm}
\label{thm:unique} {\rm\cite[Theorem 2.2]{CHVY}}
Let $\rho_s \in L^1_+(\mathbb{R}^N)\cap L^\infty(\mathbb{R}^N)$ be a non-negative stationary state of {\rm (KS)} in the sense of Definition {\rm \ref{stationarystates}}. Then $\rho_s$ must be radially decreasing up to a translation, i.e. there exists some $x_0\in \mathbb{R}^d$, such that $\rho_s(\cdot - x_0)$ is radially symmetric, and $\rho_s(|x-x_0|)$ is non-increasing in $|x-x_0|$.
\end{thm}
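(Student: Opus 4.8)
The plan is to prove this radial symmetry statement by the continuous Steiner symmetrization method, in the spirit of \cite{CHVY}, rather than by moving planes. The starting point is to recast the stationarity condition \eqref{steady} in its variational form: it is equivalent to saying that $U_s$ is a critical point of the free energy, i.e. $U_s\,\nabla\big(\tfrac{m}{m-1}U_s^{m-1}-V_s\big)=0$ a.e. in $\R^N$, with $V_s=\Gamma\ast U_s$. First I would record the regularity needed to run the argument: since $U_s\in L^1\cap L^\infty$ the interaction $\mathcal I[U_s]$ is finite by Lemma \ref{lem:HLS}, the Newtonian potential $V_s$ is $C^1$, and on each connected component of $\{U_s>0\}$ the relation $\tfrac{m}{m-1}U_s^{m-1}=V_s+C$ holds, so that $U_s$ is continuous and $U_s^{m-1}$ is $C^1$ there. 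This is what makes the flow computations below legitimate. I would then reduce the statement to a single claim: it suffices to show that for every direction $e\in S^{N-1}$ the density $U_s$ is symmetric and non-increasing in $x\cdot e$ about some hyperplane $\{x\cdot e=b_e\}$; a function with this property in every direction is automatically radially non-increasing about the common intersection point, which is then the center of mass $x_0$.

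Next, for a fixed direction taken to be $e_1$, I would introduce the continuous Steiner symmetrization $\{U_s^\tau\}_{\tau\ge0}$ of $U_s$ about a hyperplane $\{x_1=c\}$, with $U_s^0=U_s$ and $U_s^\infty$ the Steiner-symmetric rearrangement. The two structural facts I would use are: (a) the diffusion part is invariant, $\|U_s^\tau\|_m^m=\|U_s\|_m^m$ for all $\tau$, since the symmetrization is a line-by-line equimeasurable rearrangement; and (b) by the Riesz rearrangement inequality for the strictly radially decreasing Newtonian kernel $\Gamma$, the interaction $\mathcal I[U]:=\iint\Gamma(x-y)U(x)U(y)\,dx\,dy$ is non-decreasing along the flow. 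Together $(a)$ and $(b)$ give that $\tau\mapsto E[U_s^\tau]$ is non-increasing, with $\tfrac{d}{d\tau}E[U_s^\tau]=-\tfrac12\tfrac{d}{d\tau}\mathcal I[U_s^\tau]$.

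The key use of stationarity is that the right derivative of the energy vanishes at $\tau=0$. Writing the generator of the symmetrization as a continuity equation $\partial_\tau U_s^\tau|_{\tau=0}=-\nabla\cdot(U_s\,\mathbf w)$ with $\mathbf w=w\,e_1$ the associated velocity field, and integrating by parts,
\[
\frac{d}{d\tau}E[U_s^\tau]\Big|_{\tau=0}=\int_{\R^N}\frac{\delta E}{\delta\rho}[U_s]\,\partial_\tau U_s^\tau|_{0}\,dx=\int_{\R^N}\Big(U_s\,\nabla\tfrac{\delta E}{\delta\rho}[U_s]\Big)\cdot\mathbf w\,dx=0,
\]
the last equality being precisely the critical-point identity. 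Combined with the invariance $(a)$, this forces $\tfrac{d}{d\tau}\mathcal I[U_s^\tau]|_{\tau=0}=0$ for the symmetrization about every hyperplane $\{x_1=c\}$.

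Finally comes the rigidity step, which is the main obstacle and the technical heart of \cite{CHVY}. One must show that the vanishing of the first-order increment of $\mathcal I$ forces $U_s$ to be symmetric and non-increasing in $x_1$ about some hyperplane. The delicate point is that the self-interaction is translation invariant, so a single super-level set merely translates under symmetrization and contributes nothing at first order; the genuine first-order content comes from the relative shearing of distinct super-level sets whose centers differ, coupled across parallel lines through the kernel $\Gamma$. Making this quantitative — a formula for $\tfrac{d}{d\tau}\mathcal I[U_s^\tau]|_{0^+}$ as a nonnegative defect that vanishes only when a.e.\ line-restriction of $U_s$ is symmetric decreasing about a common value $b_1$ — is exactly the quantitative continuous Steiner symmetrization estimate of \cite{CHVY}. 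Granting it, the vanishing derivative yields symmetric monotone dependence on $x_1$ about $\{x_1=b_1\}$; repeating for every direction $e$ and intersecting the resulting hyperplanes at the center of mass $x_0$ gives that $U_s(\cdot-x_0)$ is radially symmetric and radially non-increasing, as claimed.
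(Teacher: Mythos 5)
You should first be aware that the paper offers no proof of this statement at all: Theorem \ref{thm:unique} is quoted verbatim as \cite[Theorem 2.2]{CHVY} and is used as an external black box, so your proposal is being measured against the proof in \cite{CHVY} itself rather than against anything internal to this paper. Your outline does track the actual method of \cite{CHVY} (continuous Steiner symmetrization, invariance of the $L^m$ term, monotonicity of the Newtonian interaction under rearrangement, and a quantitative rigidity step), but it has two concrete defects. First, the central display in your argument, the claim that $\frac{d}{d\tau}E[U_s^\tau]\big|_{\tau=0}=0$ because $\partial_\tau U_s^\tau|_{\tau=0}=-\nabla\cdot(U_s\,\mathbf{w})$ for a velocity field $\mathbf{w}$, is not justified and in general false as stated: continuous Steiner symmetrization is not generated by a vector field transporting $U_s$. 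The level sets $\{U_s>h\}$ move at $h$-dependent speeds, the induced ``velocity'' is discontinuous across level sets, and $\tau\mapsto U_s^\tau$ need not be differentiable even in $L^1$; one cannot integrate by parts against $\delta E/\delta\rho$ along this flow. Precisely because of this, \cite{CHVY} never differentiates the energy along the symmetrization. Instead they prove two one-sided quantitative bounds: (a) if $\rho_s$ is \emph{not} symmetric-decreasing in the direction $e$ about any hyperplane, then $E[S^\tau\rho_s]\le E[\rho_s]-c\,\tau$ for some $c>0$ and all small $\tau$ (the quantitative Steiner estimate on the interaction term); and (b) since the symmetrization displaces mass by at most $O(\tau)$ in an $\infty$-transport sense and $\rho_s$ is a sufficiently regular critical point (using the Euler--Lagrange relation $\frac{m}{m-1}\rho_s^{m-1}=V_s+C$ on components of the support, exactly the regularity you recorded), one has $E[S^\tau\rho_s]\ge E[\rho_s]-C\tau^2$. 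For small $\tau$ these are incompatible, which yields the symmetry in direction $e$; this two-sided $O(\tau)$ versus $O(\tau^2)$ comparison replaces your first-derivative computation.

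Second, even granting your derivative identity, your ``rigidity step'' explicitly defers the quantitative estimate to \cite{CHVY} (``Granting it\dots''), i.e.\ you invoke the technical heart of the very theorem you are proving. As a self-contained argument the proposal is therefore circular at its decisive point, and as a reduction it hinges on a differentiability property the flow does not have. The soft parts you do supply are fine: the regularity bootstrap from Definition \ref{stationarystates}, the reduction from all-directional symmetric monotonicity to radial monotonicity about a common center (each symmetry hyperplane passes through the median/center of mass, and the hyperplanes intersect in a single point), and the observation that the $L^m$ term is invariant while the interaction is non-decreasing under Steiner symmetrization. But the proof would need to be restructured around the one-sided estimates (a) and (b) above, with (a) proved rather than assumed, to constitute an actual proof of Theorem \ref{thm:unique}.
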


Now, we are ready to show the main result of this paper.

\begin{thm} \label{thm:ch-statinary-solution2}
Let $N \ge 3$, $m > 2-\frac{2}{N}$, and $M>0$. There is a unique $\alpha=\alpha(M)$, such that the function $(U_M,V_M)=(U_{R_M},V_{R_M})$ with $R_M=R_*(\alpha(M))$ obtained in Corollary {\rm\ref{cor:stationary-sol-property}} verifies all the properties stated in Theorem \ref{thm:ch-statinary-solution}.
\end{thm}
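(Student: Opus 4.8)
The plan is to show that the radial Lane--Emden solution of mass $M$ is the global minimizer of $E$ in $\mathcal{Y}_M$, and then to read off every qualitative property from the Lane--Emden analysis of Section \ref{section:Lane-Emden} together with the obstacle-problem characterization of Section \ref{section:existence-stationary-point}. Let $\alpha=\alpha(M)$ and $R_M=R_*(\alpha(M))$ be given by Corollary \ref{cor:M-alpha}, and let $U_M^\ast=\phi(|x|)$ with $\phi^{m-1}=\psi$ the associated solution of \eqref{le}, so that $U_M^\ast\in\mathcal{Y}_{M,R_M}$ has support exactly $\bar{B}_{R_M}$. First I would record that, for every $R$, the radial minimizer $U_R$ of Lemma \ref{lemma:ex-minimizer} has, by Corollary \ref{cor:stationary-sol-property}, a profile solving \eqref{le} for some $\alpha_R>0$ with support radius $R_o(R)=\min(R_*(\alpha_R),R)$; hence either the profile reaches zero inside $B_R$, in which case the mass constraint and the bijection of Corollary \ref{cor:M-alpha} force $\alpha_R=\alpha(M)$ and $U_R=U_M^\ast$, or it is a truncated Lane--Emden profile with $R_*(\alpha_R)>R$.

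The crux, and what I expect to be the main obstacle, is to rule out the truncated alternative for large $R$, i.e.\ to obtain a uniform bound on the support. Taking $R_n\to\infty$ with each $U_{R_n}$ truncated, the constraint $R_n<R_*(\alpha_n)=\alpha_n^{-\mu}R_*(1)$, with $\mu=\tfrac12\tfrac{2-m}{m-1}$ as in Corollary \ref{cor:M-alpha}, forces $\alpha_n\to\infty$ if $m>2$ and $\alpha_n\to0$ if $m<2$ (the borderline $m=2$ being immediate, as $R_*$ is then independent of $\alpha$). On the other hand the mass identity, rewritten through the same scaling as $\alpha_n^{\frac1{m-1}-\mu N}\,\omega_N\int_0^{\alpha_n^\mu R_n}\Psi^{1/(m-1)}t^{N-1}\,dt=M$, is incompatible with either limit, because $\tfrac1{m-1}-\mu N>0$, $\alpha_n^\mu R_n<R_*(1)$, and $\int_0^{R_*(1)}\Psi^{1/(m-1)}t^{N-1}\,dt<\infty$. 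This contradiction yields $U_{R_n}=U_M^\ast$ for all large $n$, whence $\mu_M=\lim_n\mu_{M,R_n}=E[U_M^\ast]$ by \eqref{limmins}; thus $U_M^\ast$ is a global minimizer in $\mathcal{Y}_M$, and since $U_M^\ast\in\mathcal{Y}_{M,R_M}$ one also gets $\mu_{M,R_M}=\mu_M$ and therefore $U_M^\ast=U_{R_M}$, the object of Corollary \ref{cor:stationary-sol-property}.

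For the uniqueness in (i) and for (v), I would show that any global minimizer $W\in\mathcal{Y}_M$, and more generally any stationary solution in the sense of Definition \ref{stationarystates}, is radially decreasing up to a translation: for minimizers this is the strict rearrangement inequality \eqref{dr} of Lemma \ref{lemma:ex-minimizer}, whose equality case forces $W$ to be a translate of $W^\#$, and for a general stationary solution it is precisely Theorem \ref{thm:unique} of \cite{CHVY}. Running the first-variation computation of Lemma \ref{lem:statinary-solution} with compactly supported admissible perturbations gives the obstacle relation $\tfrac{m}{m-1}W^{m-1}=(V_W+\hat C)_+$ in $\R^N$, and the bootstrap of Lemma \ref{lem:statinary-solution2} upgrades $W$ to $L^1\cap L^\infty$; the argument of Corollary \ref{cor:stationary-sol-property} then identifies its radial profile with a solution of \eqref{le}, necessarily compactly supported by Lemma \ref{lemma:local-existence2}, of total mass $M$, so by Corollary \ref{cor:M-alpha} it must be $U_M^\ast$ up to a translation. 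This gives (v), and also (i), since a radial minimizer with zero center of mass has translation parameter $0$.

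Finally, I would obtain parts (ii)--(iv) from the Lane--Emden structure. By Corollary \ref{cor:stationary-sol-property} the radial profile of $U_M=U_M^\ast$ solves \eqref{le} on $(0,R_M)$, which is exactly \eqref{steady} in the classical sense inside the support, while Lemma \ref{lem:statinary-solution2} and Schauder estimates for $V_M=\Gamma\ast U_M$ provide the stated regularity $U_M\in C_0(\R^N)\cap C^1(B_{R_M})$, $U_M^{m-1}\in W^{1,\infty}(\R^N)\cap W^{2,p}(B_{R_M})$, and $V_M\in C^2\cap L^p(\R^N)$; the distributional equation \eqref{steady} then extends across $\partial B_{R_M}$ because $U_M$ vanishes there with no singular flux, which is (ii). For (iv) I would take the constant $\hat C=\tfrac1M\int_{\R^N}\bigl(\tfrac{m}{m-1}U_M^{m-1}-V_M\bigr)U_M\,dx$ of Lemma \ref{lem:statinary-solution} and check that the obstacle relation holds on all of $\R^N$: outside $B_{R_M}$ Newton's theorem gives $V_M(x)=M\,\bigl((N-2)\omega_{N-1}\bigr)^{-1}|x|^{2-N}$, which is strictly below $-\hat C$ for $|x|>R_M$, so that $(V_M+\hat C)_+=0=U_M$ there. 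For (iii), since $R_M^{N-1}|\psi'(R_M^-)|=\tfrac{m-1}{m}\int_0^{R_M}s^{N-1}\psi^{1/(m-1)}\,ds>0$, the profile $\psi=U_M^{m-1}$ vanishes linearly at the free boundary, so $U_M^{m-1+\delta}=\psi^{(m-1+\delta)/(m-1)}$ behaves like $(R_M-r)^{1+\delta/(m-1)}$ near $\partial B_{R_M}$ and hence lies in $C^1(\R^N)$ with vanishing gradient there.
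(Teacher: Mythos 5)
Your proof follows the same overall architecture as the paper's: radial minimizers in $\mathcal{Y}_{M,R}$ are (possibly truncated) Lane--Emden profiles by Corollary \ref{cor:stationary-sol-property}, the truncation must be excluded for large $R$, the limit \eqref{limmins} then shows the common profile is the global minimizer in $\mathcal{Y}_M$, and uniqueness together with item (v) comes from the obstacle relation and Theorem \ref{thm:unique}. The genuine differences are local. At the crux step, the paper proves that the truncated-mass function $M_2(\alpha,R)$ of \eqref{m2} is strictly increasing in $\alpha$ for every fixed $R$ (splitting the cases $2-\frac2N<m\le 2$ and $m>2$), which gives a unique radial minimizer for \emph{every} $R$ and its stabilization once $R\ge R_*(\alpha(M))$; you instead run a contradiction argument along sequences $R_n\to\infty$, which only controls large $R$ but suffices for the theorem. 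For (iii) you replace the paper's Aronson-type estimate by the linear vanishing of $\psi$ at the free boundary, which is legitimate since $r^{N-1}\psi'(r)\to-\frac{m-1}{m}\int_0^{R_M}s^{N-1}\psi^{1/(m-1)}\,ds<0$ as $r\to R_M^-$; for (iv) you verify the exterior region via Newton's theorem, whereas the paper simply invokes \eqref{mi-characterization} and (i). Your explicit whole-space Euler--Lagrange treatment of global minimizers in $\mathcal{Y}_M$ is in fact more careful than the paper's brief appeal to Corollary \ref{cor:stationary-sol-property2}, since a minimizer in $\mathcal{Y}_M$ is not a priori compactly supported.

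There is, however, one step whose stated justification does not close: the exclusion of truncated minimizers when $m>2$. There $\alpha_n\to\infty$ and the prefactor $\alpha_n^{\frac1{m-1}-\mu N}$ diverges, but the three facts you invoke (positivity of the exponent, the bound $\alpha_n^\mu R_n<R_*(1)$, and finiteness of the full integral) are \emph{compatible} with the mass identity: since $\mu<0$, the upper limit $\alpha_n^\mu R_n$ may tend to $0$, so $\int_0^{\alpha_n^\mu R_n}\Psi^{1/(m-1)}t^{N-1}\,dt$ may tend to $0$ and could conceivably balance the divergent prefactor; an upper bound on the integral proves nothing here. You need one extra estimate, e.g.\ returning to the unscaled form of the mass: for $R_n\ge 1$ and $0<r\le 1$, monotonicity of $\Psi$ gives $\Psi(\alpha_n^\mu r)\ge\Psi(\alpha_n^\mu)$, hence $M=\omega_N\int_0^{R_n}\bigl[\alpha_n\Psi(\alpha_n^\mu r)\bigr]^{\frac{1}{m-1}}r^{N-1}\,dr\ \ge\ \frac{\omega_N}{N}\,\alpha_n^{\frac{1}{m-1}}\Psi(\alpha_n^\mu)^{\frac{1}{m-1}}$, and since $\alpha_n^\mu\to 0$ forces $\Psi(\alpha_n^\mu)\to\Psi(0)=1$, the right-hand side diverges, the desired contradiction. (For $2-\frac2N<m<2$ your three facts do suffice, since there the prefactor tends to $0$ while the integral is bounded above; and $m=2$ is immediate, as you say.) With this one line added, your argument is complete.
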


\begin{proof}
Let us fix $M>0$ and take $U_R$ a radial global minimizer in $\mathcal{Y}_{M,R}$. Using Corollary \ref{cor:stationary-sol-property}, we deduce that there exists a function $\phi\in C^1(I_o) \cap W^{2,p}(I_o) \cap C^0([0,R])$ for all $1<p<\infty$, such that $U_R(x)=\phi(|x|)$ and $\psi=\phi^{m-1}$ satisfies the Lane-Emden equation
\begin{eqnarray*}
\quad \psi^{\prime \prime}(r) + \frac{N-1}{r} \psi^{\prime}(r)
 =
-\frac{m-1}{m} \psi^{\frac{1}{m-1}}(r)\,,
\end{eqnarray*}
with $\psi'(0^+)=0$ and $\psi(0)=\alpha$ where $\alpha^{\frac1{m-1}}$ is the maximum value of $U_R$. Proceeding as in Corollary \ref{cor:M-alpha} and in \cite[Lemma 13]{S-Z}, given the solution $\Psi$ of the Lane-Emden equation \eqref{le} with the initial data
$\Psi(0)=1$, one can easily check that $\psi$ defined as
$$
\psi(r) = \alpha \Psi(\alpha^\mu r), \qquad \mbox{with }
\mu=\frac{1}{2}\left(\frac{2-m}{m-1}\right)\,,
$$
for every $\alpha >0$ is the solution of \eqref{le} with
$\psi(0)=\alpha$. This readily implies $R_*(\alpha)=\alpha^{-\mu}
R_*(1)$. For $\alpha>0$, we define $M_2(\alpha,R)$ for $R\leq R_*(\alpha)$ by
\begin{equation}\label{m2}
M_2(\alpha,R) := \omega_N \int_0^{R} \left[\alpha
\Psi(\alpha^\mu r)\right]^{\frac{1}{m-1}} r^{N-1} \ dr= \omega_N
\alpha^{\frac{1}{m-1}-\mu N} \int_0^{R\alpha^\mu}
\Psi(s)^{\frac{1}{m-1}} s^{N-1} \ ds
\end{equation}
and $M_2(\alpha,R)=M_2(\alpha,R_*(\alpha))$ for $R>R_*(\alpha)$. We clearly observe that $M_2(\alpha,R)$ is strictly increasing in $0<R<R_*(\alpha)$ and then constantly equal to $M_1(\alpha)$ for $R\geq R_*(\alpha)$ and $\alpha$ fixed, see Corollary \ref{cor:M-alpha} for the definition of $M_1(\alpha)$. We also obtained in Corollary \ref{cor:M-alpha} that $M_2(\alpha,R)$ is strictly increasing as a function of $\alpha$ for $R>R_*(\alpha)$. We remind the reader that in the diffusion-dominated case $\frac{1}{m-1}-\mu N>0$. Let us distinguish two cases:
\begin{itemize}
\item If $2-\frac2N<m\leq 2$, then $\mu\geq 0$ and the function $M_2(\alpha,R)$ is strictly increasing in $\alpha$ for fixed $R$ just looking at the second expression in \eqref{m2} with $M_2(\alpha,R)\to \infty$ as $\alpha\to \infty$.

\item If $m> 2$, then $\mu< 0$ and the function $M_2(\alpha,R)$ is strictly increasing in $\alpha$ using the first expression of $M_2(\alpha,R)$ in \eqref{m2}, due to the fact that the function $\Psi(\alpha^\mu r)$ is strictly increasing in $\alpha$. For the same reason, $M_2(\alpha,R)$ diverges too as $\alpha\to \infty$.
\end{itemize}
Therefore, for each $R>0$, there is a unique $\alpha(M,R)$ such that $M_2(\alpha,R)=M$, and thus there is a unique radial minimizer $U_R$ in $\mathcal{Y}_{M,R}$ for each $R>0$. Moreover, since $M_2(\alpha,R)$ is constantly equal to $M_1(\alpha)$ for $R\geq R_*(\alpha)$, then $\alpha(M,R)$ is constant for $R$ large enough and equal to the unique $\alpha(M)$ such that $M_1(\alpha)=M$. This shows that $U_R$ does not depend on $R$ if $R$ is large enough. Therefore, this together with \eqref{limmins} allow us top conclude that there is a unique radial global minimizer in $\mathcal{Y}_{M}$. The rest of global minimizers are obtained by translation in view of Corollary \ref{cor:stationary-sol-property2}. This finishes the proof of the first statement (i).

The statement (ii) is almost a direct consequence of the regularity properties in Lemma {\rm\ref{lem:statinary-solution2}} for $U$ and $V$. The remaining property is the classical regularity of $V$. Since $U^{m-1}\in W^{1,\infty}(\R^N)$, then $U \in C_0 \cap C^{0,\frac{1}{m-1}}(\R^N)$ for $m\geq 2$ and $U \in C^1_0(\R^N)$ for $2-\frac2N<m\leq 2$,
and hence the newtonian potential $V \in C^2(\R^N)$ by standard classical regularity theory. Then $U$ is a stationary solution of the (KS) system in the sense of Definition \ref{stationarystates} and in the classical sense in its support. We remind the reader that we already checked the first equation in the support of $U$ due to the nonlinear nonlocal equation \eqref{mi-characterization}.

To show (iii), we follow a similar argument employed in Aronson \cite{Ar3}.
Let $U(x_0)>0$. Then we see by (ii) that $\nabla U^{m-1+\delta}$ with $\delta>0$
are continuous functions in a neighbourhood of $x_0$. It is also obviously true outside the
support of $U$. Therefore, it suffices to prove that
$\nabla U^{m-1+\delta}$ is a continuous function in a
neighbourhood of $x_1\in \partial B_{R_M}$ with the additional property that $\nabla U^{m-1+\delta}(x_1)=0$.
Indeed, since $U \in C_0(\R^N)$, for every $\varepsilon$, there exists $a>0$ such that
\begin{eqnarray} \label{hani}
0 \ \le \ U(x)
& \le &
|U(x)-U(x_1)| + U(x_1)
\ \le \  \varepsilon
\end{eqnarray}
holds for all $x \in I_a(x_1):=\{x \in \overline{B_{R_M}}; |x-x_1| < a\}$. On the other hand, since we have
\begin{eqnarray}\label{sekibun}
\lefteqn{
|U^{m-1+\delta}(x)-U^{m-1+\delta}(x^{\prime})|
} \nonumber \\
& \le &
|x-x^{\prime}|
         \int_0^1 |(\nabla U^{m-1+\delta})(\tau x + (1-\tau) x^{\prime})| \ d\tau
\nonumber \\
& = & |x-x^{\prime}|  \frac{m-1+\delta}{m-1}
         \int_0^1 |U^{\delta}(\tau x + (1-\tau) x^{\prime})
                            \nabla U^{m-1}(\tau x + (1-\tau) x^{\prime})| \ d\tau
\nonumber \\
& \le & |x-x^{\prime}|  \frac{m-1+\delta}{m-1}
         \|U\|_{\infty}^{\delta}
         \|\nabla U^{m-1}\|_{\infty},
\end{eqnarray}
for all $x,x^{\prime} \in I_a(x_1)$. It follows from (\ref{hani}) and (\ref{sekibun}) that
\begin{eqnarray}\label{ilet}
|U^{m-1+\delta}(x)-U^{m-1+\delta}(x^{\prime})|
& \le &
C \varepsilon^{\delta} |x-x^{\prime}|
\qquad
{\rm for \ all} \ x,x^{\prime} \in I_a(x_1)
\end{eqnarray}
and for all  $0 < \varepsilon \le 1$, where $C=C(N,m,\delta)$.
Taking $x=x_1$ in (\ref{ilet}) and then letting $x^{\prime} \to x_1$, we have
$|\nabla U^{m-1+\delta}(x_1)| \le C\varepsilon^{\delta}$, $0<\varepsilon \le 1$.
Hence we have by letting $\varepsilon \to 0$ that
$\nabla U^{m-1+\delta}(x_1)=0$. Similarly, letting $x^{\prime} \to x$ in (\ref{ilet}), we have
\begin{eqnarray*}\label{base-2}
|\nabla U^{m-1+\delta}(x)| & \le & C\varepsilon^{\delta}
\qquad {\rm for \ all} \ 0<\varepsilon\le 1,
\end{eqnarray*}
which implies that $\nabla U^{m-1+\delta}$ is continuous at $x_1$.
We conclude that
$\nabla U^{m-1+\delta}$ is a continuous function in $\R^N$
with the additional property that
$\nabla U^{m-1+\delta}(x)=0$ at the boundary of the support.

The statement (iv) is just to collect all properties in one statement as it is a direct consequence of \eqref{mi-characterization} and (i).

To finish the proof, we only need to show the uniqueness statement of Theorem \ref{thm:ch-statinary-solution}. Assume that $\tilde{U}$ is another stationary solution of {\rm (KS)} in the sense of Definition {\rm\ref{stationarystates}} then $\tilde U \in L^1\cap L^\infty(\R^N)$ from which we deduce that $\tilde V \in L^{p}(\R^N)\cap W^{1,\infty}(\R^N)\cap W_{loc}^{2,p}(\R^N)$ for all $\frac{N}{N-2}<p<\infty$. In particular, it holds that $\tilde V \in C^1(\R^N)$. Following the arguments in \cite[Lemma 2.3]{CHVY}, see also \cite{Strohmer,CCV}, one can prove from \eqref{steady} that $\tilde U \in C^0(\R^N)$
and
\[
\frac{m}{m-1} \tilde U^{m-1}(x) + \tilde V (x) \quad \text{ is constant in each connected component of }\mbox{supp } \tilde U.
\]
Due to the regularity of the stationary solution and the main result in \cite[Theorem 2.2]{CHVY} applied to our particular case in Theorem \ref{thm:unique}, we deduce that the density $\tilde U$ is a radially decreasing continuous function except translations. Let us assume without loss of generality that the center of mass is zero. Moreover, this fact implies that $\mbox{supp } \tilde U$ has a single connected component and $\mbox{supp } \tilde U=\bar B_{\tilde R}$ for some $0<\tilde R \leq \infty$.

Since there exists a constant $\tilde C$ such that $\frac{m}{m-1} \tilde U^{m-1}(x)= \tilde V(x) + \tilde{C}$ for all $x \in B_{\tilde R}$ then $\tilde U^{m-1} \in W^{1,\infty}(\R^N)\cap W^{2,p}_{loc}(B_{\tilde R})$ for  all $1<p<\infty$, and Sobolev emmbeddings imply that $\tilde U\in C^1(B_{\tilde R})$. Summarizing, $\tilde U$ has the same regularity and properties of the radial global minimizer as in Corollary \ref{cor:stationary-sol-property}. Therefore, there exists a function $\tilde \phi\in C^2((0,\tilde R)) \cap W^{2,p}((0,\tilde R) \cap C^0([0,\tilde R])$ for all $1<p<\infty$, such that $\tilde U(x)=\tilde\phi(|x|)$ solution to the Lane-Emden equation \eqref{le} for some $\tilde \phi^{m-1}(0)=\tilde \alpha$. Since the stationary solution $\tilde U$ has mass $M$ and using the first statement of this theorem, there is a unique solution of the Lane-Emden equation with mass $M$ that is given by the global minimizer $\phi$, and thus $\tilde \phi=\phi$, implying in particular that $\tilde R<\infty$ and $\tilde U$ is compactly supported. Now, the proof of Theorem \ref{thm:ch-statinary-solution} is complete.
\end{proof}

\subsection*{Acknowledgments}
JAC was supported by projects MTM2011-27739-C04-02 and from
the Royal Society through a Wolfson Research Merit Award. YS was supported by the Japan Science and Technology Agency (JST), PRESTO. The authors are very grateful to the Mittag-Leffler Institute for providing a fruitful working environment during the special semester \emph{Interactions between Partial Differential Equations \& Functional Inequalities}.

\bibliographystyle{siam}\small
\bibliography{biblio}

\end{document}